\documentclass{mathscan}
\usepackage{hyperref}

\newtheorem{thm}{Theorem}[section]
\newtheorem{lem}[thm]{Lemma}
\newtheorem{prop}[thm]{Proposition}
\newtheorem{cor}[thm]{Corollary}
\newtheorem*{conj}{Conjecture}
\newtheorem*{thmKhint}{Theorem (Khinchine)}
\newtheorem*{thmDS}{Duffin--Schaeffer Theorem}
\newtheorem*{thmHayn}{$p$-adic Duffin--Schaeffer Theorem}
\newcommand{\DirThm}{Dirichlet's Theorem on primes}
\newtheorem*{thmDir}{\DirThm}

\numberwithin{equation}{section}

\newcounter{case}
\newcommand{\case}[1]{\refstepcounter{case}\textbf{Case \thecase: #1.}}

\newcommand{\bN}{\mathbb{N}}
\newcommand{\bR}{\mathbb{R}}
\newcommand{\bQ}{\mathbb{Q}}
\newcommand{\bZ}{\mathbb{Z}}
\newcommand{\cA}{\mathcal{A}}
\newcommand{\cB}{\mathcal{B}}
\newcommand{\cC}{\mathcal{C}}
\newcommand{\cK}{\mathcal{K}}
\newcommand{\fA}{\mathfrak{A}}
\newcommand{\fK}{\mathfrak{K}}

\DeclareMathOperator{\supp}{supp}
\newcommand{\linebreakLong}{\\[0.15cm]}

\begin{document}
	
	\title[Attainable measures of p-adic Duffin--Schaeffer sets]{Attainable measures for certain types of p-adic Duffin--Schaeffer sets}
	\author{MATHIAS L. LAURSEN}
	\address{Department of Mathematics \\
		Aarhus University \\
		8000 Aarhus C, \\
		Denmark}
	\email{mll@math.au.dk}

	\begin{abstract}
		This paper settles recent conjectures concerning the $p$-adic Haar measure applied to a family of sets defined in terms of Diophantine approximation.
		This is done by determining the spectrum of measure values for each family and seeing that this contradicts the corresponding conjectures.
	\end{abstract}

	\maketitle
	
	
	\section{Introduction}
	In Diophantine approximation, we are often interested in determining when a given number $\alpha\in[0,1]$ has infinitely many solutions to inequalities of the form
	\begin{equation}\label{eq:approx}
		 \left \vert \alpha - \frac{a}{n} \right\vert < \frac{\psi(n)}{n},
	\end{equation}
	for some chosen function $\psi:\bN\to\bR_{\ge 0}$.
	In 1924, Khintchine \cite{Khinchin} gave one of the first metric results regarding this inequality,
	as stated below.
\begin{thmKhint}
	Let $\psi:\bN\to\bR_{\ge 0}$ and write
	\begin{align*}
		\cK(\psi) = \left\{ \alpha \in [0,1] : \text{\eqref{eq:approx} holds for infinitely many } (a,n) \in \bZ\times\bN \right\}.
	\end{align*}
	If $\psi(n)$ is monotonic, then $\cK(\psi)$ has Lebesgue measure 1 if $\sum_{n=1}^\infty \psi(n) = \infty$ and has Lebesgue measure 0 otherwise.
\end{thmKhint}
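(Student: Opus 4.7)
The statement splits into two halves along the convergence/divergence dichotomy of $\sum\psi(n)$, and I would handle them separately. In both halves, set
\[ A_n = \left\{ \alpha \in [0,1] : \left\vert\alpha - \tfrac{a}{n}\right\vert < \tfrac{\psi(n)}{n}\text{ for some } a \in \bZ \right\}, \]
a union of at most $n+1$ intervals of length $2\psi(n)/n$ centered at the fractions $a/n \in [0,1]$, so that $\cK(\psi) = \limsup_{n\to\infty} A_n$.

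The convergence direction is routine. The crude bound $|A_n| \le 2\psi(n) + O(1/n)$, combined with $\sum \psi(n) < \infty$, gives $\sum |A_n| < \infty$, and the first Borel--Cantelli lemma immediately yields $|\cK(\psi)| = 0$. No appeal to monotonicity is needed here.

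For the divergence direction, I first reduce to $\psi(n) \le 1/2$ (otherwise $\cK(\psi) = [0,1]$ trivially). Then the intervals comprising $A_n$ are essentially disjoint inside $[0,1]$, which gives $|A_n| \ge 2\psi(n)(1 - 1/n)$ and hence $\sum |A_n| = \infty$. To promote this to $|\limsup A_n| > 0$, I would invoke the Chung--Erd\H{o}s quantitative Borel--Cantelli lemma, which demands the quasi-independence estimate
\[ \sum_{m,n \le N} |A_m \cap A_n| \ll \Bigl(\sum_{n \le N} |A_n|\Bigr)^2. \]
Establishing this inequality is the crux of the proof and the main obstacle. The overlap $|A_m \cap A_n|$ decomposes over pairs of numerators $(a,b)$, and for distinct fractions one has $|a/m - b/n| \ge 1/(mn)$; the monotonicity of $\psi$ then permits a Farey-style counting argument combined with gcd-type bounds on the contributing pairs, yielding the desired estimate. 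This is precisely the point where monotonicity is indispensable, and its removal is the source of the Duffin--Schaeffer subtleties that the paper turns to next.

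Finally, a zero--one law converts positive measure into full measure: since $\cK(\psi)$ is invariant under every rational translation modulo $1$, and any measurable subset of $[0,1]$ with this invariance has Lebesgue measure $0$ or $1$, the lower bound $|\cK(\psi)| > 0$ forces $|\cK(\psi)| = 1$.
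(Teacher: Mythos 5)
The paper does not prove Khintchine's theorem: it is stated as classical background, cited to Khintchine's 1924 paper, and nothing more is said. So there is no proof in the paper to compare against. Read on its own, your sketch has the right Borel--Cantelli shape: the decomposition into the sets $A_n$, the convergence half via the first Borel--Cantelli lemma (correctly noting that monotonicity plays no role there), and the Chung--Erd\H{o}s route for the divergence half are all standard and viable. The quasi-independence estimate is indeed the crux, and you flag it as such and leave it as a sketch, which is acceptable for a proposal.

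The genuine gap is the final zero--one law step. The set $\cK(\psi)=\{\alpha\in[0,1]:\|n\alpha\|<\psi(n)\text{ infinitely often}\}$ is \emph{not} invariant under rational translation modulo $1$. Translating $\alpha$ by $p/q$ with $\gcd(p,q)=1$ turns $\|n\alpha\|$ into $\|n\alpha-np/q\|$, which agrees with $\|n\alpha\|$ only when $q\mid n$; for $q\nmid n$ the two quantities are unrelated. Equivalently, a witnessing approximant $a/n$ of $\alpha$ is carried to $(aq+pn)/(nq)$, and the bound required of a denominator-$nq$ approximant, namely $\psi(nq)/(nq)$, satisfies $\psi(nq)/(nq)\le\psi(n)/(nq)<\psi(n)/n$ for monotone decreasing $\psi$ --- strictly smaller than what you actually have --- so the approximations do not transfer. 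The zero--one law that closes this argument is Cassels' zero--one law (the paper's bibliography contains the 1950 Cassels reference, used for Lemma~\ref{Lemma Gallagher}, and Section~\ref{section:spectrum Bp} also recalls Gallagher's zero--one law $|\cA|\in\{0,1\}$ for the coprime set). Both of those are proved by a genuinely more delicate dilation/doubling argument combined with the Lebesgue density theorem, not by a one-line observation about invariance under rational rotations. As written, your argument establishes $|\cK(\psi)|>0$ but not $|\cK(\psi)|=1$.
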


	Later, Duffin and Schaeffer \cite{Duffin+Schaeffer} tried to remove the monotonicity condition and found a counterexample, which lead them to suppose that it would be more natural to consider the set
	\begin{equation}
	\label{eq:main}
	\cA(\psi) = \left\{ \alpha \in [0,1] : \text{\eqref{eq:approx} holds for infinitely many } \frac{a}{n} \in \bQ \right\},
	\end{equation}
	where the fractions $a/n$ are assumed to be reduced, i.e. $\gcd(a,n)=1$.
	They then famously conjectured the below theorem, which was only proven a couple of years ago by Koukoulopoulos and Maynard \cite{Koukoulopoulos+Maynard}.
	\begin{thmDS}
		Let $\psi:\bN\to\bR_{\ge 0}$. Then
		\begin{equation*}
			\vert \cA(\psi) \vert = 
			\begin{cases} 
				0 & \text{if } \sum_{n=1}^\infty \frac{\psi(n) \phi(n)}{n} < \infty, \\
				1 & \text{if } \sum_{n=1}^\infty \frac{\psi(n) \phi(n)}{n} =\infty.
			\end{cases}
		\end{equation*}
	\end{thmDS}
	Here and throughout this paper, $\phi$ denotes the Euler totient function.
	Named after the conjecture, we will refer to $\cA(\psi)$ as the (real) Duffin--Schaeffer set.
	In handling the conjecture, one usually writes it as a lim-sup set $\cA(\psi)= \limsup_{n\to\infty} \cA_n(\psi)$ where
	\begin{align*}
		\cA_n(\psi) = \bigcup_{\substack{1\le a\le n \\ \gcd(a,n)=1}} B_\bR \bigg(\frac{a}{n}, \frac{\psi(n)}{n}\bigg)\cap[0,1].
	\end{align*}
	We use $B_X(x,r)$ to denote the closed ball of radius $r$ around the point $x$ in metric space $X$ when $r>0$, and the singleton $\{x\}$ when $r=0$.
	
	Before the conjecture was settled, Haynes \cite{Haynes} proposed a $p$-adic variant of the conjecture in terms of the $p$-adic Haar measure $\mu_p$ with $\mu_p(\bZ_p)=1$ and a $p$-adic Duffin--Schaeffer set, $\cA^p$.
	To this end, he first translated $\cA_n$ into
	\begin{align*}
		\cA^p_n(\psi) := \bigcup_{\substack{|a|\le n \\ \gcd(a,n)=1}} B_{\bQ_p} \bigg(\frac{a}{n}, \frac{\psi(n)}{n}\bigg)\cap\bZ_p
	\end{align*}
	and then defined $\cA^p$ as the limsup set of $\cA^p_n(\psi)$.
	From this, Haynes phrased a $p$-adic Duffin--Schaeffer conjecture for the set $\cA^p(\psi)$.
	One of the main results of Haynes' paper was that if a certain `quasi-independentness on average' criterion, closely related to the real Duffin--Schaeffer conjecture, were to hold, then his conjecture would follow.
	In \cite{Kristensen+Laursen}, Kristensen and Laursen modified arguments from \cite{Koukoulopoulos+Maynard} to prove this `quasi-independentness on average' for relevant $\psi$, thus settling the conjecture in the affirmative, so we may now state it as a theorem.
	\begin{thmHayn}
		We have
		\begin{equation*}
			\displaystyle\mu_p(\cA^p(\psi)) = \begin{cases}
				0	&\text{if } \sum_{n=1}^\infty \mu_p(\cA^p_n(\psi)) < \infty	\\
				1	&\text{if } \sum_{n=1}^\infty \mu_p(\cA^p_n(\psi)) = \infty.
			\end{cases}
		\end{equation*}
	\end{thmHayn}
	
	However, the set $\cA^p(\psi)$ is not the only natural choice for a $p$-adic variant of $\cA(\psi)$, as one might instead start from equation \eqref{eq:main} when translating the question to a $p$-adic context.
	For this, we first need a translation of inequality \eqref{eq:approx}.
	Following Jarník \cite{Jarnik} and Lutz \cite{Lutz}, this should be
	\begin{equation}
	\label{eq:approx p-adic}
	\left\vert \alpha - \frac{a}{n} \right\vert_p \le \frac{\psi(\max\{|a|,|n|\})}{\max\{|a|,|n|\}}.
	\end{equation}
	By fixing $\max\{|a|,|n|\}$, we limit both the numerator and the denominator.
	If we, as is done in the real case, merely compared based on $n$, the inequality would trivially have infinitely many answers $a\in\bZ$ for any $x\in\bZ_p$ whenever $\psi(n)>0$, as the fractions $a/n$ would be dense in the ball of radius $|n|_p^{-1}$ around the origin, by virtue of $\bZ$ being dense in $\bZ_p$.
	If we impose the condition that $a/n$ be a reduced fraction, the argument becomes slightly more obscure, but the fractions would still be dense in $\bZ^p$ when $\gcd(a,n)=1$ -- just add the proper multiple of a sufficiently large power of $p$ to $a$ when $\gcd(a,n)>1$.
	We then define the alternative $p$-adic Duffin--Schaeffer set as
	\begin{equation*}
		\cB^p(\psi) = \left\{ \alpha \in \bZ_p : \eqref{eq:approx p-adic} \text{ holds for inifnitely many } \frac{a}{n} \in \bQ \right\}.
	\end{equation*}
	This set was also briefly considered by Haynes \cite{Haynes}, who showed that this set allowed for $\mu_p(\cB^p(\psi))\ne 0,1$, by explicitly constructing a $\psi$ such that the measure was $p^{-1}$.
	Kristensen and Laursen \cite{Kristensen+Laursen} revisited the set and found an uncountable collection of possible values for $\mu_p(\cB^p(\psi))$ when $\psi$ varies.
	For $p=2$, they found that the possible values were exactly those in the set $[0,1/2]\cup\{1\}$ and conjectured that the set of possible values would be $[0,1/p]\cup\{1\}$ for $p>2$.
	The first main result of this paper rejects this conjecture by proving that the spectrum of possible values of $\mu_p(\cB^p(\psi))$ is in fact limited to those already found in \cite{Kristensen+Laursen}.
	As noted in that paper, this means that the spectrum becomes a Lebesgue-null set of Hausdorff dimension $\log 2 / \log p$.

	Furthermore, a new paper by Badziahin and Bugeaud \cite{Badziahin+Bugeaud} introduces another Duffin--Schaeffer-like set of $p$-adic numbers, which we will denote by ${\fA'}^p(\psi)$.
	The definition of this set is related to that of $\cB^p(\psi)$ where inequality \eqref{eq:approx p-adic} is replaced by 
	\begin{equation}
		\label{eq:approx p-adic var var}
		\left\vert \alpha - \frac{a}{b} \right\vert_p < \frac{\psi(|ab|)}{|ab|},
	\end{equation}
	so that ${\fA'}^p(\psi)$ becomes
	\begin{equation*}
		{\fA'}^p(\psi) = \left\{ \alpha \in \bZ_p : \eqref{eq:approx p-adic var var} \text{ holds for inifnitely many } \frac{a}{b} \in \bQ \right\}.
	\end{equation*}
	This set is not quite a generalisation of the original Duffin--Schaeffer set, but it appears to be of a similar nature.
	In their paper, Badziahin and Bugeaud prove a theorem inspired by the above theorem due to Khintchine and by the Duffin--Schaeffer Theorem.
	Akin to Khintchine, they achieve their result by imposing certain restrictions on the approximation function $\psi$, including some weak growth restrictions.
	After stating the theorem, they suggest that it may hold with these restrictions weakened or perhaps even removed.
	We will in this paper show that said restrictions cannot be removed entirely as the spectrum of values attainable by $\mu_p({\fA'}^p(\psi))$ 
	is the entire unit interval when $\psi$ is allowed to be any function onto $\bR_{\ge 0}$.	
	 
\section{Main Results}\label{section:main}
	For ease of notation, we will suppress $\psi$ when talking about the various Duffin--Schaeffer-inspired sets in the cases where there is no ambiguity towards the underlying $\psi$.
	
	The most obvious reason for the difference between $\cA^p$ and $\cB^p$ is that $\cA^p$ only allows fractions where the numerator is bounded by the denominator while $\cB^p$ allows numerators of any size relative to the denominator.
	As such, it appears natural to consider what happens with the remaining fractions, where the numerator is greater than the denominator.
	This leads to a third $p$-adic Duffin--Schaeffer set, $\cC^p$, which we define as the limsup set of sets $\cC^p_n$ given by
	\begin{equation*}
		\cC^p_n = \bigcup_{\substack{|a|< n \\ \gcd(a,n)=1	}}	B_{\bQ_p}\bigg(\frac{n}{a},\frac{\psi(n)}{n}\bigg)\cap \bZ_p,
	\end{equation*}
	for $n>1$, and $\cC^p_1 =\emptyset$ for $n=1$.
	It is then easy to see that $\cB^p$ is the limsup set of $\cB^p_n:=\cA^p_n \cup \cC^p_n$.
	By Dirichlet's pigeonhole principle, we then get
	\begin{equation}\label{eq:Bp=ApcupCp}
		\cB^p = \cA^p \cup \cC^p.
	\end{equation}
	
	If $p\nmid n$ and $p\mid a$, then $|n/a|_p > 1$, so that $B(n/a, \psi(n)/n)\cap \bZ_p = \emptyset$ unless $\psi(n)/n \ge |a|_p^{-1}>1$, in which case $\cC^p_n = \bZ_p = B(n/1, \psi(n)/n)\cap \bZ_p$.
	By this realization, we may instead write
	\begin{equation*}
	\cC^p_n(\psi) = \bigcup_{\substack{|a|< n \\ \gcd(a,pn)=1	}}	B_{\bQ_p}\bigg(\frac{n}{a},\frac{\psi(n)}{n}\bigg)\cap \bZ_p,
	\end{equation*}
	Note that this is also applicable for $n=1$, as the union is empty in that case.
	We will therefore use this as the \textit{de facto} definition of $\cC^p_n(\psi)$ going forward.
	
	We are now ready to state the first main result of this paper, which determines the spectra of values for $\mu_p(\cC^p)$ and $\mu_p(\cB^p)$, respectively.
	\begin{thm}
		\label{thm:spectrum}
		Let $x\in[0,1]$.
		There exists a function $\psi:\bN\to\bR_{\ge 0}$ such that $\mu_p(\cC^p(\psi)) = x$ if, and only if, $x$ is of the form
		\begin{align*}
			x = \sum_{k=0}^{\infty} x_k (p-1) p^{-k-1},
			\quad x_k\in\{0,1\}\ \forall k\in\bN_0.
		\end{align*}
		There exists a function $\psi:\bN\to\bR_{\ge 0}$ such that $\mu_p(\cB^p(\psi)) = x$ if and only if, $x=1$ or $x$ is of the form
		\begin{align*}
			x = \sum_{k=1}^{\infty} x_k (p-1) p^{-k-1},
			\quad x_k\in\{0,1\}\ \forall k\in\bN
			.
		\end{align*}
	\end{thm}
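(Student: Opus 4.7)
The plan is to reduce the theorem to a shell-by-shell $0$--$1$ law. Decompose $\bZ_p=\{0\}\sqcup\bigsqcup_{k\ge 0}S_k$ with $S_k:=p^k\bZ_p^\times$, so that $\mu_p(S_k)=(p-1)p^{-k-1}$, and aim to show
\begin{equation*}
\mu_p(\cC^p(\psi)\cap S_k)\in\{0,\mu_p(S_k)\}\quad\text{for every }k\ge 0\text{ and every }\psi.
\end{equation*}
Summing over $k$ then forces $\mu_p(\cC^p)$ into the prescribed spectrum, and combining this dichotomy with Haynes' theorem and an inversion argument handles $\cB^p$.

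Localisation to a single shell is immediate: since $\gcd(a,pn)=1$ forces $|n/a|_p=p^{-v_p(n)}$, the ball $B_{\bQ_p}(n/a,\psi(n)/n)\cap\bZ_p$ is contained in $S_{v_p(n)}$ whenever $\psi(n)/n<p^{-v_p(n)}$, while otherwise it coincides with some $p^j\bZ_p\supseteq S_{v_p(n)}$. If the latter happens for infinitely many $n$ with $v_p(n)=k$ then $S_k$ already sits inside $\cC^p$ and the verdict is automatic, so the substance lies in the small-ball regime. There I use the bijection $S_k\to\bZ_p^\times$ given by $\alpha\mapsto\gamma=p^k/\alpha$, which scales Haar measure by $p^k$ and factors as $\beta:=\alpha/p^k\mapsto\gamma=\beta^{-1}$; a direct calculation converts $|\alpha-p^km/a|_p\le\psi(p^km)/(p^km)$ into $|\gamma-a/m|_p\le\psi(p^km)/m$. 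Hence $\cC^p\cap S_k$ corresponds to a limsup set $\mathcal{E}_k\subset\bZ_p^\times$ of balls centred at reduced fractions $a/m$ with $p\nmid am$ and $|a|<p^km$, of radii $\psi(p^km)/m$.

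For $k=0$ the bound $|a|<m$ reproduces, up to a negligible boundary, the set $\cA^p(\tilde\psi)\cap\bZ_p^\times$ for $\tilde\psi(m):=\psi(m)\mathbf{1}_{p\nmid m}$, and Haynes' theorem delivers the dichotomy directly. For $k\ge 1$ write $a=qm+r$ with $|q|<p^k$ and $\gcd(r,m)=1$ to decompose
\begin{equation*}
\mathcal{E}_k=\bigcup_{|q|<p^k}\bigl(q+\mathcal{F}_{k,q}\bigr),
\end{equation*}
where $\mathcal{F}_{k,q}$ is a limsup set with centres $r/m$ subject to $p\nmid m$, $\gcd(r,m)=1$ and the single residue restriction $p\nmid qm+r$, with radii $\tilde\psi_k(m)/m$ for $\tilde\psi_k(m):=\psi(p^km)\mathbf{1}_{p\nmid m}$. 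Every $\mathcal{F}_{k,q}\subseteq\cA^p(\tilde\psi_k)$, and $\mathcal{F}_{k,0}$ equals $\cA^p(\tilde\psi_k)\cap\bZ_p^\times$ up to a null set. Haynes gives $\mu_p(\cA^p(\tilde\psi_k))\in\{0,1\}$: in the null case every $\mathcal{F}_{k,q}$, and hence $\mathcal{E}_k$, is null; in the co-null case $\mathcal{F}_{k,0}$ alone already fills $\bZ_p^\times$, so $\mu_p(\mathcal{E}_k)=\mu_p(\bZ_p^\times)$. Pulling back through the bijection yields $\mu_p(\cC^p\cap S_k)\in\{0,\mu_p(S_k)\}$, completing the $\cC^p$ spectrum.

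Finally, for $\cB^p=\cA^p\cup\cC^p$: Haynes' theorem gives $\mu_p(\cA^p)\in\{0,1\}$, and if $\mu_p(\cA^p)=1$ then $\mu_p(\cB^p)=1$. Otherwise the identity $|\alpha-n/a|_p=|\alpha^{-1}-a/n|_p$ on $\bZ_p^\times$ identifies $\cC^p\cap S_0$ with $\cA^p\cap S_0$ in measure, forcing $\mu_p(\cC^p\cap S_0)=0$ and leaving $\mu_p(\cB^p)=\sum_{k\ge 1}x_k(p-1)p^{-k-1}$. The ``if'' direction of both spectra is constructive along the lines of \cite{Kristensen+Laursen}: given any prescribed $(x_k)$, take $\psi$ supported on a sparse family of $n$ with $v_p(n)\in\{k:x_k=1\}$ and $\psi(n)/n$ tuned so that the Koukoulopoulos--Maynard quasi-independence estimate (in its $p$-adic form from \cite{Kristensen+Laursen}) fills each selected shell; keeping $\psi$ zero on $n$ coprime to $p$ secures $\mu_p(\cA^p)=0$ when required. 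The main obstacle is the $k\ge 1$ part of the dichotomy: the enlarged numerator range $|a|<p^km$ falls outside the direct scope of Haynes, so the integer-part decomposition into finitely many $\mathcal{F}_{k,q}$'s and the comparison of each with $\cA^p(\tilde\psi_k)$ is the delicate step where the $p$-adic structure must be tracked carefully.
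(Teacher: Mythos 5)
Your proposal takes a genuinely different route for the key technical step (the shell-wise $0$--$1$ law, Proposition~\ref{Prop:0-1 law}). The paper proves it from scratch via a Lebesgue density argument built around the digit-shifting map $\tau_2(p^kb)=p^k/\tau_1(b)$, showing $\tau_2\bigl(\cC^p(\psi)\bigr)\subseteq\cC^p(p\psi)$ and iterating at a density point; essentially Haynes' proof of $\mu_p(\cA^p)\in\{0,1\}$ redone for $\cC^p$. You instead invoke Haynes' theorem as a black box: the inversion $\alpha\mapsto p^k/\alpha$ carries $\cC^p\cap S_k$ onto a limsup set of balls at $a/m$ with $|a|<p^km$, and the integer-part split $a=qm+r$ decomposes this into finitely many translates of Duffin--Schaeffer-type sets $\mathcal{F}_{k,q}\subseteq\cA^p(\tilde\psi_k)$, with $\mathcal{F}_{k,0}$ being $\cA^p(\tilde\psi_k)\cap\bZ_p^\times$ up to a null set; Haynes' dichotomy for $\cA^p(\tilde\psi_k)$ then forces each $\mathcal{F}_{k,q}$ null or $\mathcal{F}_{k,0}$ full. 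This is a clean conceptual reduction that saves re-deriving the density machinery (the paper's Lemmas~\ref{Lemma lambda and mup}--\ref{Lemma 2 Haynes} and the $\tau_2$-gymnastics), at the cost of making the $\cC^p$ result non-self-contained. Your treatment of the constructive ``if'' direction is, by contrast, more involved than the paper's, which needs no quasi-independence machinery at all: the paper just sets $\psi(n)=x_{\nu_p(n)}n/p^{\nu_p(n)+1}$, whereupon a single $\cC^p_{p^kq}$ with $q$ prime already covers $p^k\bZ_p^\times$ by a direct residue computation.

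One genuine gap should be flagged in your localisation step. You reduce $\cC^p\cap S_k$ to the limsup over $n$ with $\nu_p(n)=k$ after remarking that the big-ball case ($\psi(n)/n\ge p^{-\nu_p(n)}$) for those $n$ trivially fills $S_k$. But balls $B_{\bQ_p}(n/a,\psi(n)/n)$ coming from $n$ with $\nu_p(n)=j\ne k$ can \emph{also} cover $S_k$: whenever $j\le k$ and $\psi(n)/n\ge p^{-j}$, the ball is $p^{j'}\bZ_p$ for some $j'\le j\le k$, hence contains $S_k$. If this occurs for infinitely many such $n$ while all balls with $\nu_p(n)=k$ remain small, your $\mathcal{E}_k$ could be null even though $\cC^p\cap S_k=S_k$. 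The paper forecloses exactly this with the quantity $l$ in Proposition~\ref{Prop:mup Cp as sum} (it requires $\psi(n)/n<p^{-j}$ eventually for all $n\in p^j\bN$ and all $j\le k$ before concluding $\cC^p\cap p^k\bZ_p^\times=\limsup_{\nu_p(n)=k}\cC^p_n$). The fix for your argument is routine --- enlarge the ``trivial'' case to cover all $n\in p^j\bN$ with $j\le k$ and $\psi(n)/n\ge p^{-j}$ --- but as written the reduction to $\nu_p(n)=k$ is unjustified.
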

	
	We now turn our attention towards the set ${\fA'}^p$ from \cite{Badziahin+Bugeaud}.
	In order to have notation more in line with the Duffin--Schaeffer sets $\cA$ and $\cB^p$, we alter inequality \eqref{eq:approx p-adic var var} slightly,
		\begin{equation}
		\label{eq:approx p-adic var}
		\left\vert \alpha - \frac{a}{b} \right\vert_p \le \frac{\psi(|ab|)}{|ab|},
	\end{equation}
	which leads to a set
	\begin{equation*}
		\fA^p(\psi) = \left\{ \alpha \in \bZ_p : \eqref{eq:approx p-adic var} \text{ holds for inifnitely many } \frac{a}{b} \in \bQ \right\}.
	\end{equation*}
	To see that this set will have the same spectrum of measures as ${\fA'}^p$, define $\psi'$ by
	\begin{align*}
		\psi'(n) = \begin{cases}
			p\psi(n)	&\text{if } \psi(n)=n/p^k	\text{ for some } k\in\bZ,	\\
			\psi(n)	&\text{otherwise}.
		\end{cases}
	\end{align*}
	Note that
	\begin{equation*}
		\left|\alpha - \frac{a}{n}\right|_p \le 0
	\end{equation*}
	has at most 1 solution $a/n\in \bQ$ for any given $\alpha$, and so the collection of $n$ with $\psi(n)=0$ contributes to neither $\fA^p(\psi)$ nor ${\fA'}^p(\psi')$.
	For $\psi(n)>0$, inequality \eqref{eq:approx p-adic var var} with $\psi'$ is equivalent to inequality \eqref{eq:approx p-adic var} with $\psi$. 
	It thus follows that $\fA^p(\psi) = {\fA'}^p(\psi')$.
	Since there is an obvious bijection between $\psi$ and $\psi'$, $\fA^p$ and ${\fA'}^p$ have the same spectrum of measures.	
	
	We will also consider a Khintchine-like variant of the set, defined as
	\begin{equation*}
		\fK^p(\psi) = \left\{ \alpha \in \bZ_p : \eqref{eq:approx p-adic var} \text{ holds for inifnitely many } (a,b) \in (\bZ\setminus\{0\})\times \bN \right\}.
	\end{equation*}
	By taking $\bZ_\infty$, $\bQ_\infty$, $|\cdot|_\infty$, and $\mu_\infty$ to mean $[0,1]$, $\bR$, $|\cdot|$, and the Lebesque measure, respectively, we define related sets over the real numbers when allowing $p=\infty$.

	As is the case for the `proper' Duffin--Schaeffer sets, we will want to write $\fA^p$ and $\fK^p$ as the limsup set of sets $\fA^p_n$ and $\fK^p_n$, respectively.
	Write $B_{\bQ_p}(\pm a, r)=B_{\bQ_p}(a, r)\cup B_{\bQ_p}(-a, r)$ and $n=\prod_{i=1}^{\omega(n)} p_i^{\nu_{p_i}(n)}$, where $\omega(n)$ denotes the number of distinct prime divisors of $n$.
	By defining
	\begin{align*}
		\fK^p_n(\psi) &= \bigcup_{a\mid n} B_{\bQ_p}\bigg(\pm \frac{a}{n/a},\frac{\psi(n)}{n}\bigg)\cap\bZ_p,
		\\
		\fA^p_n(\psi) &= \bigcup_{a_1,\ldots,a_{\omega(n)}\in\{0,1\}} B_{\bQ_p}\bigg(\pm \frac{\prod_{a_i=1} p_i^{\nu_{p_i}(n)}}{\prod_{a_i=0} p_i^{\nu_{p_i}(n)}}, \frac{\psi(n)}{n}\bigg)\cap\bZ_p,
	\end{align*}
	we immediately achieve
	\begin{equation*}
		\fK^p = \limsup_{n\to\infty} \fK^p_n,
		\qquad
		\fA^p = \limsup_{n\to\infty} \fA^p_n.
	\end{equation*}
	It so happens that the spectra of values for $|\fA^\infty|$ and $|\fK^\infty|$ are surprisingly easy to settle while the spectra of values for $\mu_p(\fA^p)$ and $\mu_p(\fK^p)$ when $p<\infty$ require significantly more care.
	However, the spectra are nonetheless independent of $p$ as they always take up the entire unit interval as seen by the below theorem.
	We will prove the easy case of $p=\infty$ immediately and save $p<\infty$ for Section \ref{section:mult Ap}. That section will be independent of Sections \ref{section:p-adic measure theory} and \ref{section:spectrum Bp}.
	\begin{thm}
		\label{thm:spectrum var}
		Let $p$ be a prime or $p=\infty$, and let $x\in[0,1]$.
		Then there exists a $\psi:\bN\to\bR_{\ge 0}$ such that $\mu_p(\fA^p(\psi))=\mu_p(\fK^p(\psi))=x$.
	\end{thm}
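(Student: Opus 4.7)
For $p = \infty$, my plan is an explicit construction. Given $x \in (0, 1]$, let $(q_k)_{k \in \bN}$ enumerate the primes in increasing order, set $\psi(q_k) = xq_k$ and $\psi(n) = 0$ for non-primes (and take $\psi \equiv 0$ if $x = 0$). For any prime $n$, we have $\omega(n) = 1$ and the divisors are $1, n$, so the two unions defining $\fA^\infty_n$ and $\fK^\infty_n$ coincide with
\begin{equation*}
\fA^\infty_n(\psi) = \fK^\infty_n(\psi) = \Bigl(B_\bR(\pm 1/n, x) \cup B_\bR(\pm n, x)\Bigr) \cap [0,1].
\end{equation*}
For $n$ sufficiently large, the balls around $\pm n$ lie outside $[0,1]$ while the balls around $\pm 1/n$ jointly cover $[0, 1/n + x]$. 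This sequence is eventually decreasing in $n$, so
\begin{equation*}
\fA^\infty(\psi) = \fK^\infty(\psi) = \limsup_{k \to \infty} [0, 1/q_k + x] = [0, x],
\end{equation*}
which has Lebesgue measure $x$.

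\textbf{Strategy for $p < \infty$.} My plan here is analogous in spirit. I would pick a sparse sequence $n_1 < n_2 < \cdots$ of squarefree integers coprime to $p$, ensuring $\fA^p_{n_k} = \fK^p_{n_k}$ automatically (both unions are indexed by the $2^{\omega(n_k)}$ subsets of prime factors of $n_k$, equivalently its divisors), and tune $\psi(n_k)/n_k = p^{-K_k}$ for a well-chosen sequence $K_k \to \infty$. Expanding the target as $x = \sum_{j \ge 1} a_j p^{-j}$ in base $p$, I would aim to build the $E_k := \fA^p_{n_k}$ as a decreasing nested chain of unions of $p$-adic balls of radius $p^{-K_k}$ with $\mu_p(E_k)$ approaching $x$ from above. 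Taking $\psi \equiv 0$ off $\{n_k\}$, the limsup becomes $\bigcap_k E_k$, of measure $x$.

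\textbf{Main obstacle.} The delicate step is realizing a prescribed union of $p$-adic balls as an actual $\fA^p_{n_k}$. For $n_k = q_1 \cdots q_m$ with distinct primes $q_i \neq p$, the centers of $\fA^p_{n_k}$ are the $2^{m+1}$ signed ratios $\pm \prod_{i \in I} q_i/\prod_{i \notin I} q_i$ for $I \subseteq \{1, \dots, m\}$, and these must land in precisely the residues modulo $p^{K_k}$ that constitute $E_k$. \DirThm{} lets us choose each $q_i$ in any prescribed residue class modulo $p^{K_k}$, so the problem reduces to a combinatorial/group-theoretic question: for which subsets $S \subseteq (\bZ/p^{K_k}\bZ)^\times$ can one find residues $r_1, \ldots, r_m$ so that $\{\pm \prod_{i \in I} r_i/\prod_{i \notin I} r_i : I \subseteq \{1,\dots,m\}\}$ equals $S$? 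I would expect the crux to be showing that a sufficiently flexible family of configurations is achievable, and that the chains $E_1 \supseteq E_2 \supseteq \cdots$ can be built coherently across scales so that the nesting is preserved and the intersection really attains the prescribed measure $x$.
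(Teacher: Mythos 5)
Your $p=\infty$ argument is essentially the paper's, and it is correct.

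For $p<\infty$ you have only a programme, not a proof, and the step you yourself flag as ``the crux'' is precisely where it would break down. The set of centres $\bigl\{\pm\prod_{i\in I} r_i / \prod_{i\notin I} r_i : I\subseteq\{1,\dots,m\}\bigr\}$ modulo $p^K$ is not an arbitrary even-cardinality subset of $(\bZ/p^K\bZ)^\times$: writing $c=\prod_i r_i^{-1}$, it equals $\pm c\cdot\{\prod_{i\in I} r_i^2\}$, hence is closed under $s\mapsto -s$ and under $s\mapsto c^2/s$, and is contained in a coset of the subgroup generated by the $r_i^2$. Engineering a nested decreasing chain $E_1\supseteq E_2\supseteq\cdots$ of such constrained configurations, at increasing scales $p^{K_k}$, with $\mu_p(E_k)\to x$ and each $E_{k+1}$ fitting coherently inside $E_k$, is a genuinely hard combinatorial problem that you have not addressed. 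Without a realizability lemma, the argument has no content beyond the (correct) reductions you make.

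The paper sidesteps this entirely by never looking at products of many primes. It supports $\psi$ on integers $n=p^kq$ with $q$ a single prime and $k\leq K$, so that $\fA^p_{p^kq}$ has at most four balls centred at $\pm q^{\pm 1}p^k$; crucially, it obtains arbitrarily small positive measures not by using a single shrinking scale but by letting $\psi(p^kq)$ depend on the residue class of $q$, assigning radius $p^{-k-1}$ to $q$ in a chosen set $I_{x_k}$ of residues mod $p$, and much smaller radius $p^{-k-i-1}$ to $q$ in progressively thinner classes mod $p^{i+1}$. The limsup over $q$ (via \DirThm) then becomes a disjoint union of balls of many different radii inside $p^k\bZ_p^\times$ whose total measure is tuned digit-by-digit in base $p$. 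Even so, one must control collisions among $\pm q^{\pm1}$ modulo $p$ and $p^{i+1}$, which is why the paper's proof splits into cases according to $p\bmod 4$ (whether $-1$ is a square) and handles $p\in\{2,3,5\}$ separately. Your plan does not engage with any of these collision issues, and this degeneracy is not an afterthought: it is what makes the small-prime cases delicate in the paper, and it would afflict your ``combinatorial/group-theoretic question'' at least as severely.
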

	\begin{proof}[Proof for $p=\infty$]
		Let $x\in [0,1]$ and define $\psi:\bN\to\bR_{\ge 0}$ by
		\begin{align*}
			\psi(n) = \begin{cases}
				nx	&\text{if } n\text{ is a prime}	\\
				0	&\text{otherwise} 
			\end{cases}
		\end{align*}
		Since $\psi$ is supported on the primes, we immediately have that $\fA^\infty=\fK^\infty$.
		We are thus left with showing that $|\fA^\infty|=x$.
		Upon applying the Borel-Cantelli Lemma, this follows by a brief calculation, where we use $q$ to denote primes:
		\begin{align*}
			|\fA^\infty| &= \Bigg|\limsup_{q\to\infty}\bigcup_{e_1,e_2\in\{-1,1\}} B_{\bR}(e_1 q^{e_2},x) \cap[0,1]\Bigg|
			\\&
			= \bigg|\limsup_{q\to\infty} [0,x+q^{-1}]\bigg|
			= x.
		\end{align*}
	This completes the proof
	\end{proof}

	Note that $\fA^p_n(\psi)\subseteq\fK^p_n(\psi)$ will hold with strict inclusion for all small enough $\psi$, so we should not expect equality between $\mu_p(\fA^p)$ and $\mu_p(\fK^p)$ in general.
	In fact, if we in the above proof had taken
	\begin{align*}
		\psi(n) = \begin{cases}
		nx	&\text{if } n\text{ is the square of a prime},	\\
		0	&\text{otherwise},
		\end{cases}
	\end{align*}
	we would still find $|\fA^\infty| =x$ while $|\fK^\infty| = \min\{2x, 1\}$ as $\fK^p$ now also accepts infinitely many copies of $1=q/q$ as approximants and thereby includes the interval $[1-x,1]$ in $\fK^\infty$.
	
	\section{Some $p$-adic measure theory}\label{section:p-adic measure theory}
	In this section, we will present a series of general measure theoretical results that will be used for the proof of Theorem \ref{thm:spectrum}.
	Except for Lemma \ref{lem:Set inversion}, these results all appear to have been applied to some extent in \cite{Haynes}, even though only Lemma \ref{Lemma Gallagher} was formally stated.
	All results are to be applied in proving Proposition \ref{Prop:0-1 law}, which will be introduced in Section \ref{section:spectrum Bp} and plays a central role in proving the `only if' parts of Theorem \ref{thm:spectrum}.
	
	Note that each non-negative real number $x\in\bR_{\ge 0}$ has a canonical base $p$ expansion of the form
	\begin{align*}
	x=\sum_{n=N}^{\infty} a_n p^{-n},
	\end{align*}
	where $a_n\in \{0,\ldots,p-1\}$, $\liminf a_n < p-1$, and $N\in \bZ_{\le 0}$ is maximal possible.
	Throughout this paper, the function $\iota_p:\bR_{\ge 0}\to \bQ_p$ denotes the associated map
	\begin{align*}
		x=\sum_{n=N}^{\infty} a_n p^{-n}\mapsto \iota_p(x) = \sum_{n=N}^{\infty} a_n p^n.
	\end{align*}
	This function is measure preserving in that $\lambda(A)=p\mu_p(\iota_p(A))$ for any measurable subset $A\subseteq \bR$.
	\begin{lem}
		\label{Lemma lambda and mup}
		The preimage map $\iota_p^{-1}$ maps balls $B\subseteq \bQ_p$ to half-open intervals in $\bR$ of length $p\mu_p(B)$.
		In particular, $\iota_p$ is measurable, and the associated push-forward measure on the Lebesgue measure, $\iota_{p\#}$, is equal to $p\mu_p$.
	\end{lem}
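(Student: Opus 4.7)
The plan is to verify the ball assertion directly, and then read off measurability and the push-forward identity by standard measure-theoretic extension. Fix a ball $B\subseteq \bQ_p$. Because $|\cdot|_p$ takes values only in $\{p^{-k}:k\in\bZ\}\cup\{0\}$, every such $B$ has the form $B=B_{\bQ_p}(y,p^{-k})$ for some $y\in\bQ_p$ and $k\in\bZ$, with Haar measure $\mu_p(B)=p^{-k}$. Writing $y=\sum_{n\ge N_y}b_n p^n$ in its $p$-adic digit expansion and invoking the ultrametric triangle inequality, $B$ is precisely the set of $z=\sum c_n p^n\in\bQ_p$ for which $c_n=b_n$ at every position $n<k$.

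Next I would transport this digit constraint across $\iota_p$. By the definition of the map, $\iota_p(x)\in B$ amounts to the canonical base-$p$ expansion $x=\sum a_n p^{-n}$ satisfying $a_n=b_n$ for all $n<k$. Setting $c:=\sum_{n<k}b_n p^{-n}$ (a finite sum, as $b_n=0$ for $n<N_y$), the $x$'s satisfying this constraint are exactly the reals $c+\sum_{n\ge k}a_n p^{-n}$ with $a_n\in\{0,\ldots,p-1\}$ and $\liminf_{n\to\infty}a_n<p-1$. As $(a_n)_{n\ge k}$ ranges over such admissible sequences, the tail sum ranges bijectively over $[0,p^{1-k})$: the sequences excluded by the $\liminf$ condition are precisely those eventually equal to $p-1$, which constitute only the alternative representation of the single right endpoint $p^{1-k}$. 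Hence $\iota_p^{-1}(B)=[c,c+p^{1-k})$ is a half-open interval of length $p^{1-k}=p\mu_p(B)$, which is the central assertion.

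For the remaining conclusions, the collection of balls forms a $\pi$-system generating the Borel $\sigma$-algebra on $\bQ_p$, and each preimage $\iota_p^{-1}(B)$ is Borel (being an interval), so $\iota_p$ is Borel measurable. The push-forward $\iota_{p\#}\lambda$ and the measure $p\mu_p$ agree on this generating $\pi$-system and are both $\sigma$-finite; for instance, the balls $B_{\bQ_p}(0,p^k)$, $k\in\bN$, exhaust $\bQ_p$ with finite measure on each. The standard uniqueness theorem for $\sigma$-finite measures then forces $\iota_{p\#}\lambda=p\mu_p$ on all Borel sets. The one delicate point throughout is the canonicity step in the middle paragraph: one has to confirm that the expansions forbidden by $\liminf a_n<p-1$ form only a countable set, so that $\iota_p^{-1}(B)$ really is the full half-open interval rather than a slightly trimmed subset.
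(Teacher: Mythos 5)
Your proof is correct and follows the same route as the paper: compute $\iota_p^{-1}$ on a ball by observing it fixes the digits at positions $n<k$ and frees the tail, conclude the image is a half-open interval of length $p^{1-k}=p\mu_p(B)$, and extend to the full Borel $\sigma$-algebra via the uniqueness theorem for $\sigma$-finite measures agreeing on the $\pi$-system of balls; the only variation is that you work with balls in $\bQ_p$ directly rather than first in $\bZ_p$. One factual correction, however: tails $(a_n)_{n\geq k}$ eventually equal to $p-1$ are the non-canonical expansions of \emph{every} terminating base-$p$ rational in $(0,p^{1-k}]$, not merely of the right endpoint $p^{1-k}$, so your parenthetical (and the framing of the ``delicate point'' at the end, which cites countability of the forbidden tails) misidentifies what is being discarded. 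The bijection onto $[0,p^{1-k})$ holds for the standard reason that every nonnegative real possesses a canonical base-$p$ expansion, so the forbidden tails are redundant rather than load-bearing and their exclusion trims only the single unattained supremum; with that substitution the argument is sound.
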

	\begin{proof}
		Notice that
		\begin{align*}
		\iota_p^{-1}(\bZ_p) &= \bigg\{\sum_{m=0}^\infty b_m p^{-m} \in\bR:  b_m\in\{0,\ldots,p-1\},\ \liminf_{m\to\infty} b_m < p-1 \bigg\}
		\\&
		= [0,p).
		\end{align*}
		Let $B$ be a ball in $\bZ_p$ and write $B = \sum_{m=0}^{M-1} b_m p^{m} + p^M \bZ_p $ for some $M\in \bN_0$ and $b_0,\ldots, b_{M-1}\in\{0,\dots, p-1\}$.
		We then have
		\begin{align*}
			\iota_p^{-1}(B) &= \iota_p^{-1}\bigg( \sum_{m=0}^{M-1} b_m p^{m} \bigg) + p^{-M} \iota_p^{-1}(\bZ_p)	
			= \sum_{m=0}^{M-1} b_m p^{-m} + [0,p^{1-M}),
		\end{align*}
		which proves the first part of the lemma.
		Since the Borel algebra of $\bZ_p$ is generated by the collection of balls in $\bZ_p$, and the intervals are contained in the Borel algebra of $\bR_{\geq 0}$, $\iota_p$ is a measurable function, and the push-forward $\iota_{p\#}$ is a Borel-measure on $\bQ_p$.
		From the above equation, we notice in particular that for all balls $B$ in $\bQ_p$,
		\begin{align*}
		\iota_{p\#}(B) = p\mu_p(B).
		\end{align*}
		Since all balls have finite measure, and the collection of the empty set and all balls in $\bQ_p$ is preserved under pairwise intersection and generates the Borel algebra of $\bQ_p$, this implies that $\iota_{p\#} = p\mu_p$, by the Uniqueness of Measures Theorem, and the proof is complete.
	\end{proof}
	The alternative definition of $\mu_p$ provided by the above lemma gives a tool to translate measure theoretic results regarding $\bR$ into measure theoretic results regarding $\bQ_p$.
	It appears that Haynes may have used this alternative definition in \cite{Haynes}, though it was not explicitly stated.
	One result that very easily translates using $\iota_p$ is the below lemma from \cite{Gallagher}, which is a modification of a lemma from \cite{Cassels_1950}.
	\begin{lem}
		\label{Lemma Gallagher}
		Let $\{I_n\}_{n\in\bN}$ be a sequence of real intervals with $\lambda(I_n)\underset{n\to\infty}{\longrightarrow}0$, and let $\{U_n\}_{n\in\bN}$ be a sequence of measurable sets such that
		\begin{align*}
		U_n\subseteq I_n,
		\quad
		|U_n|\geq \varepsilon|I_n|
		\qquad
		\forall n\in\bN,
		\end{align*}
		for some fixed $0<\varepsilon<1$.
		Then $|\limsup_{n\to\infty} U_n| = |\limsup_{n\to\infty} I_n|$.
	\end{lem}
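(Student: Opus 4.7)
The inclusion $\limsup_{n\to\infty} U_n\subseteq \limsup_{n\to\infty} I_n$ is immediate from $U_n\subseteq I_n$, so only the reverse measure inequality requires work. My plan is to argue by contradiction via the Lebesgue density theorem: assume $V := \limsup_{n} I_n \setminus \limsup_{n} U_n$ has positive Lebesgue measure and derive a contradiction.

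To couple the density hypothesis with the measure bound $|U_n|\ge \varepsilon|I_n|$, I would decompose
\begin{equation*}
V = \bigcup_{N\in\bN} V_N, \qquad V_N := \Bigl(\limsup_{n\to\infty} I_n\Bigr)\cap \bigcap_{n\ge N}(\bR\setminus U_n).
\end{equation*}
Each $V_N$ is measurable and, by construction, disjoint from every $U_n$ with $n\ge N$. If $|V|>0$, countable subadditivity yields some $N$ with $|V_N|>0$, and the Lebesgue density theorem furnishes a point $x\in V_N$ at which $|V_N\cap(x-r,x+r)|/(2r)\to 1$ as $r\to 0^+$.

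Since $x\in\limsup_{n} I_n$, I pick an infinite subsequence $\{n_k\}$ with $n_k\ge N$ and $x\in I_{n_k}$. Because $x\in I_{n_k}$ and $\lambda(I_{n_k})\to 0$, each $I_{n_k}$ fits inside the symmetric interval $(x-\lambda(I_{n_k}), x+\lambda(I_{n_k}))$; the density property then forces $|V_N\cap I_{n_k}|/\lambda(I_{n_k})\to 1$. On the other hand, $V_N$ and $U_{n_k}$ are disjoint subsets of $I_{n_k}$, whence
\begin{equation*}
|V_N\cap I_{n_k}| \le \lambda(I_{n_k})-|U_{n_k}| \le (1-\varepsilon)\lambda(I_{n_k}),
\end{equation*}
contradicting the density limit. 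Hence $|V_N|=0$ for every $N$, so $|V|=0$ and the two limsup sets agree in measure.

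The only genuinely delicate point is transferring the two-sided symmetric density statement to the intervals $I_{n_k}$, which merely contain $x$ rather than being centred there. This is cleanly handled by the sandwich $I_{n_k}\subseteq (x-\lambda(I_{n_k}),\, x+\lambda(I_{n_k}))$ noted above, which lets the symmetric density estimate absorb the lopsidedness of $I_{n_k}$ with a loss of only a constant factor; no finer Vitali-style argument is needed.
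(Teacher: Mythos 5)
Your argument is correct, and it is the standard proof of this classical lemma. Note that the paper does not actually prove the statement; it cites it directly from Gallagher (as a modification of Cassels), so there is no in-paper proof to compare against. Your route via the Lebesgue density theorem is essentially the same one used in those sources: the decomposition $V=\bigcup_N V_N$ with $V_N$ disjoint from every $U_n$ for $n\ge N$, the selection of a density point of some $V_N$ with positive measure, and the contradiction with $|U_{n_k}|\ge\varepsilon\lambda(I_{n_k})$ all check out. Your handling of the off-centre intervals is also fine: writing $r_k=\lambda(I_{n_k})$, the containment $I_{n_k}\subseteq(x-r_k,x+r_k)$ gives $|V_N\cap I_{n_k}|\ge|V_N\cap(x-r_k,x+r_k)|-r_k$, so $|V_N\cap I_{n_k}|/r_k\to 1$, while the disjointness with $U_{n_k}$ caps the same ratio at $1-\varepsilon$.
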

	\begin{cor}\label{Cor Gallagher}
		Let $\{B_n\}_{n\in\bN}$ be a sequence of $p$-adic balls with $\mu_p(B_n)\underset{n\to\infty}{\longrightarrow}0$.
		Suppose $\{U_n\}_{n\in\bN}$ is a sequence of measurable sets such that, for some
		\begin{align*}
		U_n\subseteq B_n,
		\quad
		\mu_p(U_n)\geq \varepsilon\mu_p(B_n)
		\qquad
		\forall n\in\bN,
		\end{align*}
		for some fixed $0<\varepsilon<1$.
		Then $\mu_p(\limsup_{n\to\infty} U_n)=\mu_p(\limsup_{n\to\infty} B_n)$.
	\end{cor}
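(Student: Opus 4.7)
The plan is to deduce the corollary directly from Lemma \ref{Lemma Gallagher} by transporting the problem from $\bQ_p$ back to $\bR$ along the measurable map $\iota_p$ and then using the push-forward identity $\iota_{p\#} = p\mu_p$ supplied by Lemma \ref{Lemma lambda and mup}.

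First, I would set $I_n := \iota_p^{-1}(B_n)$ and $V_n := \iota_p^{-1}(U_n)$. By Lemma \ref{Lemma lambda and mup}, each $I_n$ is a half-open real interval of length $p\mu_p(B_n)$, so $\lambda(I_n)\to 0$. Since $\iota_p$ is measurable, each $V_n$ is measurable, and $U_n\subseteq B_n$ gives $V_n\subseteq I_n$ automatically.

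Next I would transfer the size condition via the push-forward equality. Applied to $U_n$ and $B_n$ this reads
\begin{align*}
\lambda(V_n) = \iota_{p\#}(U_n) = p\mu_p(U_n) \geq p\varepsilon\mu_p(B_n) = \varepsilon\lambda(I_n),
\end{align*}
so the hypotheses of Lemma \ref{Lemma Gallagher} are met with the same $\varepsilon$. That lemma then yields $\lambda(\limsup_n V_n) = \lambda(\limsup_n I_n)$. Since preimages commute with countable unions and intersections we have $\limsup_n V_n = \iota_p^{-1}(\limsup_n U_n)$ and $\limsup_n I_n = \iota_p^{-1}(\limsup_n B_n)$, and a final application of $\iota_{p\#} = p\mu_p$ converts the real measures back to $p$-adic ones; dividing by $p$ gives the claim.

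I do not foresee any real obstacle: the work is essentially bookkeeping, and the only point that merits a moment of thought is the interchange of $\iota_p^{-1}$ with $\limsup$, which is a formal property of preimages. Everything else is a direct application of the two lemmas already at our disposal.
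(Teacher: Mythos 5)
Your proposal is correct and follows essentially the same route as the paper: transport the sets to $\bR$ via $\iota_p^{-1}$, use $\iota_{p\#}=p\mu_p$ from Lemma \ref{Lemma lambda and mup} to convert the measure hypotheses and conclusions, note that $\iota_p^{-1}$ commutes with the countable unions and intersections forming the $\limsup$, and invoke Lemma \ref{Lemma Gallagher}. The paper spells out the interchange of $\iota_p^{-1}$ with $\limsup$ as a limit of measures of nested unions, but that is just a slightly more explicit way of saying what you said.
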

	\begin{proof}
		Applying $p\mu_p = \iota_{p\#}$ by Lemma \ref{Lemma lambda and mup}, we have 
		\begin{align*}
		p\mu_p\Big(\limsup_{n\to\infty} U_n\Big)
		&= 
		\left\rvert \iota_p^{-1} \Bigg(\bigcap_{N=1}^\infty \bigcup_{n\ge N} U_n \Bigg)\right\rvert
		= \lim_{N\to\infty} \left\lvert \iota_p^{-1} \Bigg(	\bigcup_{n\ge N} U_n	\Bigg)\right\rvert
		\\&
		= \Big|\limsup_{n\to\infty} \iota_p^{-1}(U_n) \Big|.
		\end{align*}
		By repeating the process for $B_n$, the statement follows by Lemma \ref{Lemma Gallagher}, and the proof is complete.
	\end{proof}
	Another relevant result that may be derived using $\iota_p$ is a $p$-adic version of the Lebesgue Density Theorem, as presented below.
	\begin{lem}[Lebesgue Density Theorem for $\bQ_p$]
		\label{Lemma padic density theorem}
		Let $A$ be a measurable subset of $\bQ_p$ such that $\mu_p(A)>0$.
		Then $A$ contains a point of density 1, which is to say that
		there exists $a\in A$ such that if $\{B_M\}_{M\in\bN}$ is a sequence of balls $B_M\ni a$ of radius $r(B_M)\underset{M\to \infty}{\longrightarrow} 0$, then
		\begin{align*}
		\frac{\mu_p(A\cap B_M)}{\mu_p(B_M)}\underset{M\to \infty}{\longrightarrow} 1.
		\end{align*}
	\end{lem}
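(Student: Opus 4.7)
My plan is to transfer the classical Lebesgue Density Theorem on $\bR$ to $\bQ_p$ via the measure-preserving map $\iota_p$ of Lemma~\ref{Lemma lambda and mup}. First I would set $A' := \iota_p^{-1}(A) \subseteq \bR_{\geq 0}$; by the push-forward identity $\iota_{p\#} = p\mu_p$, this set is Borel measurable with $|A'| = p\mu_p(A) > 0$. Applying the classical real density theorem to $A'$ produces some $y \in A'$ at which $|A' \cap B_{\bR}(y,r)|/|B_{\bR}(y,r)| \to 1$ as $r\to 0$, and I would propose $a := \iota_p(y) \in A$ as the candidate density point in $\bQ_p$.

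To verify this works, fix any sequence of $p$-adic balls $B_M \ni a$ with $r(B_M) \to 0$. Lemma~\ref{Lemma lambda and mup} describes $I_M := \iota_p^{-1}(B_M)$ as a half-open interval of length $p\mu_p(B_M)$ that shrinks to $0$, and clearly $y \in I_M$ since $a \in B_M$. The set-theoretic identity $\iota_p^{-1}(A \cap B_M) = A' \cap I_M$ together with the push-forward relation then rewrites the ratio of interest as
\begin{equation*}
    \frac{\mu_p(A \cap B_M)}{\mu_p(B_M)} = \frac{|A' \cap I_M|}{|I_M|},
\end{equation*}
thereby reducing the lemma to a purely real statement about the asymptotic density of $A'$ along the shrinking sequence $\{I_M\}$.

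The main obstacle is that the Lebesgue Density Theorem as usually stated concerns balls centered at $y$, whereas each $I_M$ merely contains $y$ as a (possibly off-center) interior point. However, $I_M \subseteq B_{\bR}(y, |I_M|)$ with $|I_M| = \frac{1}{2}|B_{\bR}(y, |I_M|)|$, so $\{I_M\}$ is a regularly shrinking family at $y$. The elementary estimate
\begin{equation*}
    \frac{|I_M \setminus A'|}{|I_M|} \leq 2\cdot\frac{|B_{\bR}(y,|I_M|) \setminus A'|}{|B_{\bR}(y,|I_M|)|}
\end{equation*}
combined with the fact that $y$ is a density point of $A'$ forces the right-hand side to $0$, yielding $|A' \cap I_M|/|I_M| \to 1$ and completing the proof.
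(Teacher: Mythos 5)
Your proof is correct and follows the same overall route as the paper's: push $A$ over to $\bR$ via $\iota_p$, invoke the classical Lebesgue Density Theorem to find a density point $y\in A'=\iota_p^{-1}(A)$, set $a=\iota_p(y)\in A$, and transport the ratio back using $\iota_{p\#}=p\mu_p$. The difference is in how each argument handles the fact that the preimage interval $I_M=\iota_p^{-1}(B_M)$ contains $y$ but need not be centred there. You observe $I_M\subseteq B_\bR(y,|I_M|)$ and use the factor-of-two bound
\begin{align*}
\frac{|I_M\setminus A'|}{|I_M|}\le 2\,\frac{|B_\bR(y,|I_M|)\setminus A'|}{|B_\bR(y,|I_M|)|},
\end{align*}
which is the textbook ``regularly shrinking family'' reduction and closes the argument in one line. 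The paper instead fixes $\varepsilon>0$, uses continuity of measure to fatten $\tilde B_M$ into an open interval $I_M\supseteq\tilde B_M$ with $|\tilde A\cap I_M|\le p^\varepsilon|\tilde A\cap\tilde B_M|$ and $|I_M|\le 2|\tilde B_M|$, runs an equivalent eccentricity computation in terms of $u_M=\max\{s_M,t_M\}$ on $(x-u_M,x+u_M)$, obtains $\liminf\ge p^{-\varepsilon}$, and lets $\varepsilon\to0$. Your version reaches the same conclusion while avoiding the auxiliary $\varepsilon$ and open-interval construction; it is shorter and, if anything, slightly tidier. Both are correct.
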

	\begin{proof}
		By Lemma \ref{Lemma lambda and mup}, we have $p \mu_p = \iota_{p\#}$.
		Thus $\tilde{A}=\iota_p^{-1}(A)$ has positive Lebesgue measure, which means that it must contain a point $x\in\tilde{A}$ of density 1, by the Lebesgue Density Theorem.
		Put $a=\iota_p(x)$, and let $\{B_M\}$ be a collection of balls around $a$ in $\bQ_p$ of radius $r(B_M)\underset{M\to \infty}{\longrightarrow} 0$. 
		
		Let $\varepsilon>0$, and put $\tilde{B}_M = \iota_p^{-1}(B_M)$.
		Since $\iota_p^{-1}$ maps balls to half-open intervals of length $p \mu_p(B_M)$ by Lemma \ref{Lemma lambda and mup}, and we must have $\tilde{B}_M\ni x$, it follows that $|\tilde{A}\cap \tilde{B}_M|>0$, as $x$ is of density 1 in $\tilde{A}$.
		Using once more that $\tilde{B}$ is an interval, this allows us to pick open intervals $I_M\supseteq\tilde{B}_M$ such that $|\tilde{A}\cap I_M|\le p^{\varepsilon} |\tilde{A}\cap \tilde{B}_M|$ and $|I_M| \le 2 |\tilde{B}_M|$.
		Hence,
		\begin{align}\label{eq:density calc}
		\frac{\mu_p(A\cap B_M)}{\mu_p (B_M)}
		= \frac{|\tilde A\cap \tilde B_M|}{|\tilde B_M|}
		\ge 
		 \frac{ p^{-\varepsilon} |\tilde{A}\cap I_M| }{ |I_M| }.
		\end{align}
		As $I_M\ni x$, write $I_M = (x-s_M, x+t_M)$ with $s_M,t_M>0$ and put $u_M = \max\{s_M, t_M\}$.
		Then
		\begin{align*}
			1\ge \frac{ |\tilde{A}\cap I_M| }{ |I_M| }  &
			\ge  \frac{ |\tilde{A}\cap (x-u_M,x+u_M)| - (u_M -s_M) - (u_M-t_M) }{ s_M+t_M } 
			\\&
			= \frac{2u_M}{s_M+t_M}  \frac{ |\tilde{A}\cap (x-u_M,x+u_M)| }{ |(x-u_M,x+u_M)| } - \frac{2u_M}{s_M+t_M} + 1.
		\end{align*}
		Since $x$ is of density 1, and we have 
		\begin{align*}
			u_M \le 2 |I_M| \le 4|\tilde{B}_M| = 4 p\, \mu_p(B_M) \underset{M\to \infty}{\longrightarrow} 0,
		\end{align*}
		the squeezing lemma then implies that $|\tilde{A}\cap I_M|/|I_M|\to 1$ as $M\to\infty$.
		Combined with equation \eqref{eq:density calc}, this means that
		\begin{align*}
			\liminf_{M\to\infty} \frac{\mu_p(A\cap B_M)}{\mu_p (B_M)}
			\ge p^{-\varepsilon},
		\end{align*}
		and the lemma follows by letting $\varepsilon$ tend to $0$.
	\end{proof}
	We will also use the below lemma, which relies on elementary algebra and then the Uniqueness of Measures Theorem. This does not use $\iota_p$.
	\begin{lem}\label{lem:Set inversion}
		Let $f:\bZ_p^\times\to \bZ_p^\times$ denote the map $x\mapsto 1/x$.
		Then $f$ maps balls in $\bZ_p^\times$ to balls of the same measure in $\bZ_p^\times$.
		In particular, $f$ preserves $\mu_p$ restricted to $\bZ_p^\times$.
	\end{lem}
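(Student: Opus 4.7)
The plan is to work directly from the definition of balls in $\bZ_p^\times$ and exploit the simple algebraic identity $\frac{1}{y}-\frac{1}{a}=\frac{a-y}{ay}$ to show that $f$ preserves the ball structure, and then invoke the Uniqueness of Measures Theorem to upgrade the ball-wise statement to a measure-preservation statement on all Borel sets.

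First I would reduce to the non-trivial case. Every ball in $\bZ_p^\times$ is of the form $B_{\bQ_p}(a,r)\cap \bZ_p^\times$ for some $a\in\bZ_p^\times$; when $r\ge 1$ the intersection is all of $\bZ_p^\times$, which is evidently fixed by $f$. So I may assume $r=p^{-k}$ with $k\ge 1$, in which case $B_{\bQ_p}(a,p^{-k})\subseteq \bZ_p^\times$ already (since $|y-a|_p\le p^{-k}<1=|a|_p$ forces $|y|_p=1$ by the ultrametric inequality).

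Next I would compute $f$ on such a ball. For $y\in B_{\bQ_p}(a,p^{-k})$, both $a$ and $y$ lie in $\bZ_p^\times$, so $|ay|_p=1$ and
\begin{equation*}
\left|\frac{1}{y}-\frac{1}{a}\right|_p = \left|\frac{a-y}{ay}\right|_p = |a-y|_p \le p^{-k}.
\end{equation*}
This shows $f(B_{\bQ_p}(a,p^{-k}))\subseteq B_{\bQ_p}(1/a, p^{-k})$. Since $f$ is its own inverse and $1/a\in\bZ_p^\times$, applying the same computation to $B_{\bQ_p}(1/a,p^{-k})$ yields the reverse inclusion, and hence the equality
\begin{equation*}
f\bigl(B_{\bQ_p}(a,p^{-k})\bigr) = B_{\bQ_p}(1/a,p^{-k}).
\end{equation*}
Both balls have $\mu_p$-measure $p^{-k}$, proving the first assertion.

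For the second assertion I would note that the collection consisting of the empty set together with all balls contained in $\bZ_p^\times$ is closed under pairwise intersection and generates the Borel $\sigma$-algebra of $\bZ_p^\times$. The first part shows $f_\#\mu_p|_{\bZ_p^\times}$ agrees with $\mu_p|_{\bZ_p^\times}$ on this generating $\pi$-system, so the Uniqueness of Measures Theorem (exactly as used in the proof of Lemma \ref{Lemma lambda and mup}) gives equality of the two measures on all Borel sets. There is no real obstacle here; the only thing to watch is to rule out the degenerate cases $r\ge 1$ and to be careful that the ball $B_{\bQ_p}(a,p^{-k})$ for $k\ge 1$ automatically lies in $\bZ_p^\times$, so no intersection with $\bZ_p^\times$ shrinks it.
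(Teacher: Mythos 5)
Your proof is correct and follows exactly the approach the paper indicates in the sentence preceding the lemma (``relies on elementary algebra and then the Uniqueness of Measures Theorem''), where the paper's own proof is recorded only as ``This is a simple calculation.'' You have spelled out, correctly, the calculation the author left implicit: the identity $\frac{1}{y}-\frac{1}{a}=\frac{a-y}{ay}$ together with $|ay|_p=1$ gives the ball-to-ball map, and the $\pi$-system argument from Lemma \ref{Lemma lambda and mup} upgrades it to all Borel sets.
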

	\begin{proof}
		This is a simple calculation.
	\end{proof}
	
\section{Proof of Theorem \ref{thm:spectrum}} \label{section:spectrum Bp}
	To prove Theorem \ref{thm:spectrum}, we will use two propositions.
	The first one puts $\mu_p(\cB^p)$ equal to $\mu_p(\cA^p)$ or $\mu_p(\cC^p)$, depending on a divergence criterion, and splits $\cC^p$ into smaller pieces to be dealt with individually.
	
	In the proof and for the rest of this paper, we will use $\sqcup$ to denote the disjoint union of sets.
	\begin{prop}
		\label{Prop:mup Cp as sum}
		Let $l$ be the minimal $k\in \bN_0$ such that $\psi(n)/n \ge p^{-k}$ for infinitely many $n\in p^k \bN$, if such a $k$ exists. Otherwise, put $l=\infty$ and write $p^{-\infty}=0$. 
		Then
		\begin{align}\label{eq:mup Bp}
			\mu_p(\cB^p)&= \begin{cases}
				\mu_p(\cA^p) = 1	&\text{if } \sum_{p\nmid n} \frac{\phi(n)\psi(n)}{n}=\infty,	\\
				\mu_p(\cC^p)
				&\text{if }\sum_{p\nmid n} \frac{\phi(n)\psi(n)}{n}<\infty,
			\end{cases}
			\\\label{eq:mup Cp}
			\mu_p(\cC^p) &= p^{-l} + \sum_{0\le k<l} \mu_p \big(\cC^p\cap p^k\bZ_p^\times\big).
		\end{align}
		Furthermore, if $k<l$, then $\cC^p\cap p^k\bZ_p^\times= \limsup_{\nu_p(n)=k} \cC_n^p
		$.
	\end{prop}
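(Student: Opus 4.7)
The plan is to prove the three claims (the ``furthermore'' identity, the decomposition of $\mu_p(\cC^p)$, and the dichotomy for $\mu_p(\cB^p)$) in that order, since each feeds into the next. First I will unpack the structure of $\cC^p_n$: because $\gcd(a,pn)=1$ forces $|a|_p=1$, every ball in $\cC^p_n$ is centered at some $n/a\in p^{\nu_p(n)}\bZ_p^\times$. This already clarifies which $\cC^p_n$ can contribute to which coset $p^k\bZ_p^\times$, and the rest of the argument is really a case analysis based on comparing $\nu_p(n)$ with $k$ and comparing $\psi(n)/n$ with the relevant powers of $p$.

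For the ``furthermore'' claim, fix $k<l$. The minimality of $l$ says that for every $j<l$ only finitely many $n\in p^j\bN$ satisfy $\psi(n)/n\ge p^{-j}$. If $\nu_p(n)=k$, then cofinitely $\psi(n)/n<p^{-k}$, so $B(n/a,\psi(n)/n)\subseteq n/a+p^{k+1}\bZ_p\subseteq p^k\bZ_p^\times$. If $\nu_p(n)=m\ne k$, the ultrametric distance between $p^m\bZ_p^\times$ and $p^k\bZ_p^\times$ is $p^{-\min(m,k)}$, and by the same minimality applied with $j=\min(m,k)<l$ this distance is cofinitely too large for $B(n/a,\psi(n)/n)$ to bridge. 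Since taking limsups commutes with intersection, this yields $\cC^p\cap p^k\bZ_p^\times=\limsup_{\nu_p(n)=k}\cC^p_n$.

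Next I will derive the decomposition of $\mu_p(\cC^p)$. Writing $\bZ_p=\{0\}\sqcup\bigsqcup_{k\ge 0}p^k\bZ_p^\times$ gives $\mu_p(\cC^p)=\sum_{k\ge 0}\mu_p(\cC^p\cap p^k\bZ_p^\times)$. When $l<\infty$, the defining property of $l$ supplies infinitely many $n>1$ with $p^l\mid n$ and $\psi(n)/n\ge p^{-l}$; for each such $n$ the choice $a=1$ produces the ball $B(n,\psi(n)/n)\supseteq B(n,p^{-l})=p^l\bZ_p$, so $p^l\bZ_p\subseteq\cC^p$. Hence $\cC^p\cap p^k\bZ_p^\times=p^k\bZ_p^\times$ for all $k\ge l$, and $\sum_{k\ge l}(p-1)p^{-k-1}=p^{-l}$ contributes exactly the $p^{-l}$ term. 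Combining with the previous paragraph gives the formula; for $l=\infty$ the $p^{-l}$ term vanishes by convention and the sum simply runs over all $k\ge 0$.

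Finally, for the $\mu_p(\cB^p)$ dichotomy I will apply the $p$-adic Duffin--Schaeffer Theorem to $\cA^p$. Two routine facts are needed: for $p\nmid n$, ball-counting in $\bZ_p$ yields $\mu_p(\cA^p_n)\asymp\phi(n)\psi(n)/n$ (with implicit constants depending only on $p$); for $p\mid n$, since $|a/n|_p=p^{\nu_p(n)}>1$ whenever $\gcd(a,n)=1$, we have $\cA^p_n=\bZ_p$ if $\psi(n)/n\ge p^{\nu_p(n)}$ and $\cA^p_n=\emptyset$ otherwise. Divergence of $\sum_{p\nmid n}\phi(n)\psi(n)/n$ therefore forces $\sum_n\mu_p(\cA^p_n)=\infty$, hence $\mu_p(\cA^p)=1=\mu_p(\cB^p)$. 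If that sum converges, either only finitely many $p\mid n$ attain $\cA^p_n=\bZ_p$, giving $\mu_p(\cA^p)=0$ and $\mu_p(\cB^p)=\mu_p(\cC^p)$ trivially; or infinitely many do, in which case $\cA^p=\bZ_p$ and those $n$ satisfy $\psi(n)/n\ge p^{\nu_p(n)}\ge p>1=p^{-0}$, forcing $l=0$, so the decomposition already proved gives $\mu_p(\cC^p)=p^{0}=1=\mu_p(\cB^p)$. The one step requiring genuine care is the comparison $\mu_p(\cA^p_n)\asymp\phi(n)\psi(n)/n$ for $p\nmid n$, which must control overlaps among the $\phi(n)$ balls inside $\bZ_p$; but this follows the pattern of the estimates already used in \cite{Haynes,Kristensen+Laursen}, so it is technical rather than conceptually hard. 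Everything else is a clean case split.
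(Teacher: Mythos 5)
Your proposal is correct and follows essentially the same route as the paper: the same case split on $\nu_p(n)$ versus $k$ for the ``furthermore'' identity (the paper argues via $|\alpha|_p$ directly where you phrase it as an ultrametric distance between shells, but these are the same computation), the same $p^l\bZ_p\subseteq\cC^p$ observation for the decomposition, and the same Borel--Cantelli / Duffin--Schaeffer dichotomy for $\cB^p$ (the paper cites \cite[Theorem 2]{Kristensen+Laursen} for the divergent case where you unwind it via a ball count, and it shows $\cC^p=\bZ_p$ directly where you deduce $l=0$ and invoke the already-proved decomposition, but these are cosmetic). One small imprecision: $\mu_p(\cA^p_n)\asymp\phi(n)\psi(n)/n$ is false when $\psi(n)$ is large (the left side is capped at $1$); the correct comparison is $\mu_p(\cA^p_n)\asymp\min\{1,\phi(n)\psi(n)/n\}$, which still yields the divergence-implies-divergence step you need.
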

	\begin{proof}
		We start by proving equation \eqref{eq:mup Bp}.
		If $\sum_{p\nmid n} \frac{\phi(n)\psi(n)}{n}=\infty$, this is \cite[Theorem 2]{Kristensen+Laursen}, so suppose not.
		Then $\sum_{p\nmid n}\mu_p(\cA_n^p)<\infty$.
		If $\sum_{p\mid n}\mu_p(\cA_n^p)=\infty$, then we must have $\psi(n)\ge pn$ infinitely often, and so the definitions imply $\cB_{n}^p\supseteq \cC_n^p \supseteq B_{\bQ_p}(n/1,p) \cap \bZ_p = \bZ_p$ for these $n$ when $n>1$.
		Hence, $\cB^p=\cC^p=\bZ_p$.
		If $\sum_{p\mid n}\mu_p(\cA_n^p)<\infty$, then the Borel-Cantelli Lemma implies $\mu_p(\cA^p)=0$, and the statement follows by equation \eqref{eq:Bp=ApcupCp}.
		
		Moving on to equation \eqref{eq:mup Cp}, notice that
		\begin{align*}
			\cC^p = (\cC^p \cap p^l \bZ_p)\sqcup \bigsqcup_{0\le k<l} \cC^p \cap p^k \bZ_p^\times,
		\end{align*}
		with the convention of $p^\infty = 0$ (as an element of $\bZ_p$) in the case of $l=\infty$.
		Equation \eqref{eq:mup Cp} is then equivalent to $\mu_p(\cC^p \cap p^l \bZ_p) = p^l$.
		If $l=\infty$, this is trivial, so suppose $l<\infty$. 
		Let $\alpha\in p^l\bZ_p$.
		By definition of $l$, there are infinitely many $n\in p^l \bN$ with $\psi(n)\ge p^{-l} n$.
		For these $n$, $|\alpha - n/1 |_p \le p^{-l} \le \psi(n)/n$, and so $\alpha\in\cC^p_n$.
		Hence, $\cC^p\supseteq p^l\bZ_p$, implying $\mu_p(\cC^p \cap p^l \bZ_p) = p^l$ as claimed.
		
		As for the final part of the statement, this is vacuous if $l=0$, so suppose $l>0$ and let $0\le j \le  k<l$.
		Then $\psi(n)/n < p^{-j}$ for all but finitely many $n\in p^j\bN$.
		If $\nu_p(n)=j$, then $\alpha\in\cC_n^p$ would imply 
		\begin{equation*}
			|\alpha|_p = \bigg|\alpha - \frac{n}{a} + \frac{n}{a}\bigg|_p = p^{-j},
		\end{equation*}
		for some $|a|<n$ with $\gcd(a,pn)=1$ when $n$ is sufficiently large, so that
		\begin{align*}
			\limsup_{\nu_p(n)=j} \cC_n^p \subseteq p^j \bZ_p^\times.
		\end{align*}
		If $\nu_p(n)>k$, we would instead find 
		\begin{equation*}
			|\alpha|_p \le \max\bigg\{ \bigg|\alpha - \frac{n}{a}\bigg|_p, \bigg|\frac{n}{a}\bigg|_p\bigg\} < p^{-k},
		\end{equation*}
		for some $|a|<n$ with $\gcd(a,pn)=1$ when $n$ is sufficiently large,
		so that
		\begin{align*}
			\limsup_{\nu_p(n)>k} \cC_n^p \subseteq p^{k+1} \bZ_p.
		\end{align*}
		By the pigeon-hole principle, the proposition is proven upon calculating
		\begin{align*}
			\cC^p\cap p^k \bZ_p^\times &
			= \left(\limsup_{\nu_p(n)>k} \cC_n^p \cap p^k \bZ_p^\times \right)\cup \bigcup_{j=0}^k \limsup_{\nu_p(n)=j} \cC_n^p \cap p^k \bZ_p^\times
			\\&
			= \limsup_{\nu_p(n)=k} \cC_n^p.
		\end{align*}
	\end{proof}
	
	The other proposition leading to Theorem \ref{thm:spectrum} is a bit more involved and may be thought of as a shell-wise zero-full law for $\cC^p$.
	This relates to, and is inspired by, the zero-one laws $|\cA|\in\{0,1\}$ \cite{Gallagher} and $\mu_p(\cA^p)\in\{0,1\}$ \cite{Haynes}.
	\begin{prop}
		\label{Prop:0-1 law}
		Let $k\in\bN_0$.
		Then $\mu_p(\cC^p\cap p^k \bZ_p^\times)\in\{0,(p-1)/p^{k+1}\}$.
	\end{prop}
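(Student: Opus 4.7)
For $k\ge l$, Proposition \ref{Prop:mup Cp as sum} gives $\cC^p \supseteq p^l\bZ_p$, and hence $\cC^p \cap p^k\bZ_p^\times = p^k\bZ_p^\times$, of measure $(p-1)/p^{k+1}$. So I focus on $k<l$, aiming for a zero-full dichotomy.

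My plan is to reduce the problem to the zero-one law $\mu_p(\cA^p)\in\{0,1\}$ contained in the $p$-adic Duffin--Schaeffer Theorem. Consider the map $T_k\colon p^k\bZ_p^\times \to \bZ_p^\times$ defined by $T_k(x)=p^k/x$, formed as the composition of the scaling $x\mapsto x/p^k$ (a bijection $p^k\bZ_p^\times\to\bZ_p^\times$ that scales $\mu_p$ by a factor of $p^k$) with the measure-preserving inversion $x\mapsto 1/x$ on $\bZ_p^\times$ supplied by Lemma \ref{lem:Set inversion}. Setting $m:=n/p^k$ and $\tilde\psi(m):=\psi(p^k m)$, a direct calculation yields $T_k(B_{\bQ_p}(n/a,\psi(n)/n)) = B_{\bQ_p}(a/m,\tilde\psi(m)/m)$ for each admissible $a$. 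Consequently $T_k(\cC^p(\psi)\cap p^k\bZ_p^\times)$ is the limsup (over $m$ coprime to $p$) of unions of balls $B_{\bQ_p}(a/m,\tilde\psi(m)/m)$ with $|a|<p^km$ and $\gcd(a,pm)=1$, a set structurally close to $\cA^p(\tilde\psi)\cap \bZ_p^\times$. If these two limsup sets share the same $\mu_p$-measure, then applying the zero-one law to $\tilde\psi$ gives $\mu_p(T_k(\cC^p\cap p^k\bZ_p^\times))\in\{0,(p-1)/p\}$, whence $\mu_p(\cC^p\cap p^k\bZ_p^\times)\in\{0,(p-1)/p^{k+1}\}$ after dividing by $p^k$, as required.

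The main obstacle is establishing this measure equality. The $\cC^p$-side admits numerators with $|a|<p^km$, whereas $\cA^p$ only allows $|b|<m$. When $\tilde\psi(m)\ge 1$, so that $\tilde\psi(m)/m=p^{-j}$ with $p^j\le m$, the Chinese Remainder Theorem (exploiting $\gcd(p^j,m)=1$) produces for each admissible $a$ an admissible $b$ with $B_{\bQ_p}(a/m,p^{-j})=B_{\bQ_p}(b/m,p^{-j})$, yielding set-theoretic equality. The delicate case is $\tilde\psi(m)<1$, where such matching may fail; here I would invoke Corollary \ref{Cor Gallagher} to enclose each unmatched $\cC^p$-ball inside a slightly larger ball of measure a fixed multiple of the original, so that the limsup measures agree despite the ball-by-ball discrepancy. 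Combined with the Lebesgue Density Theorem for $\bQ_p$ (Lemma \ref{Lemma padic density theorem}), which provides the density-1 machinery to promote positive measure to full measure, this should complete the dichotomy.
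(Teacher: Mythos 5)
Your reduction via the inversion $T_k(x)=p^k/x$ is a natural idea and indeed accounts for the radius and centre transformations correctly. The containment $T_k(\cC^p(\psi)\cap p^k\bZ_p^\times)\supseteq\cA^p(\tilde\psi)\cap\bZ_p^\times$ holds (after silently imposing $\supp\psi\subseteq p^k\bN\setminus p^{k+1}\bN$, which itself needs the last part of Proposition~\ref{Prop:mup Cp as sum}). But the argument then stalls, because this containment is genuinely strict. At level $n=p^km$ your $T_k$-image is a union of balls $B(a/m,\tilde\psi(m)/m)$ over $|a|<p^km$, $\gcd(a,pm)=1$, while $\cA^p_m(\tilde\psi)$ only allows $|b|\le m$. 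Two such balls coincide exactly when the numerators agree modulo $p^{j-k}$ (with $\psi(n)/n\asymp p^{-j}$), and when $p^{j-k}$ is larger than $2m$ but comparable to $p^km$, the wider window $(-p^km,p^km)$ can hit up to on the order of $p^k$ times as many residue classes coprime to $p$ as the narrow window $(-m,m)$. So $\mu_p(T_k(\cC^p_n))$ can exceed $\mu_p(\cA^p_m(\tilde\psi)\cap\bZ_p^\times)$ by a factor bounded by $p^k$ but not by $1$, and you have no reverse containment. The $0$--$1$ law for $\cA^p$ gives you no control at all on the measure of the strictly larger set; for the value $(p-1)/p$ you would be fine by squeezing against $\mu_p(\bZ_p^\times)$, but in the ``measure $0$'' case you cannot conclude anything from $\mu_p(\cA^p(\tilde\psi))=0$.

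Neither of your two proposed repairs closes this. The Chinese Remainder Theorem only produces a representative of a residue class $r\bmod p^j$ coprime to $m$ somewhere in a window of length $p^j m$, not in $[-m,m]$; securing one in $[-m,m]$ is a sieve problem (this is precisely what Lemma~\ref{Lemma 2 Haynes} accomplishes, and it needs the threshold $\psi(n)>4^{\omega(n)}$ to control the error terms $k_d$). And Corollary~\ref{Cor Gallagher} pairs each set $U_n$ to a single containing ball $B_n$ with a fixed measure ratio; it offers no mechanism for discarding entire unmatched balls from a union, which is exactly what you would need to do when $T_k(\cC^p_n)$ has more balls than $\cA^p_m(\tilde\psi)\cap\bZ_p^\times$.

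The paper avoids this difficulty entirely. It does not reduce to the $\cA^p$ zero-one law. Instead it builds a measure-multiplying map $\tau_2$ (a digit shift composed with the inversion of Lemma~\ref{lem:Set inversion}), proves by a direct algebraic manipulation of the approximating fraction $n/a$ that $\tau_2(\cC^p(\psi))\subseteq\cC^p(p\psi)$ with $\mu_p(\tau_2(A))=p\,\mu_p(A)$, uses Lemma~\ref{lem:Gallagher Cp} to replace $\psi$ by $p^j\psi$ without changing the measure, and then iterates $\tau_2$ starting from a ball of near-full density (supplied by Lemma~\ref{Lemma padic density theorem}) to inflate any positive measure up to $(p-1)/p^{k+1}$. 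You have the right ingredients on the table (inversion, Gallagher, density points), but the central step — a rigid inclusion between $\cC^p$-type sets under a measure-multiplying map — is exactly the piece your proposal is missing.
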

	The proof of this proposition follows the same overall structure as the proof of the 0-1 law in \cite{Haynes}, with some modifications.
	In that light, it is perhaps not surprising that we will need the below lemma, which corresponds to the less trivial part of \cite[Lemma 2]{Haynes}.
	We will here continue to use $\omega(n)$ to denote the number of prime divisors of $n$. 
	The proof of the lemma will apply the Möbius function $\mu$, which is defined as $\mu(d)=(-1)^{\omega(d)}$ when $d$ is square free, and $\mu(d)=0$ otherwise.
	In this context, the following three facts, which can be found in \cite{Montgomery+Vaughan}, will be applied without proof.
	\begin{align}
		\label{eq:mu sum}
		\sum_{d\mid n}\mu(d) &= \begin{cases}
			1	&\text{if } d=1,	\\
			0	&\text{if } d\ne 1,
		\end{cases}
		\\
		\label{eq:phi as mu sum}
		\sum_{d\mid n}\mu(d)\frac{n}{d} &= \phi(n),
		\\
		\label{eq:phi/n as product}
		\frac{\phi(n)}{n}&= \prod_{q\mid n} (1-q^{-1}).
	\end{align}
	\begin{lem}
		\label{Lemma 2 Haynes}
		If $n>1$ and $\psi(n)>4^{\omega(n)}$, then $\cC_n^p \supseteq p^{\nu_p(n)} \bZ_p^\times$.
	\end{lem}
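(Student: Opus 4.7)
My strategy is to reinterpret the inclusion as an arithmetic congruence-covering problem and then apply a Möbius sieve.

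Write $\nu = \nu_p(n)$ and $m = n/p^\nu$, so $\gcd(m, p) = 1$.  Every $\alpha \in p^\nu \bZ_p^\times$ has the form $\alpha = p^\nu \beta$ for a unique $\beta \in \bZ_p^\times$, and the condition $\alpha \in B_{\bQ_p}(n/a, \psi(n)/n)$ is equivalent to the congruence $a\beta \equiv m \pmod{p^L}$, where $L$ is the least non-negative integer with $p^{-L} \leq \psi(n)/m$.  If $\psi(n) \geq m$ then $L = 0$ and the single ball at $a = 1$ already covers $p^\nu \bZ_p$; so I may assume $\psi(n) < m$ and $L \geq 1$.  Setting $c := m\beta^{-1} \bmod p^L \in (\bZ/p^L\bZ)^\times$, the lemma reduces to: for every such $c$ there exists an integer $a$ with $|a| < n$, $\gcd(a, pn) = 1$, and $a \equiv c \pmod{p^L}$.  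Since $\gcd(c, p) = 1$, the congruence forces $\gcd(a, p) = 1$ automatically, so the only remaining constraints are $|a| < n$, $a \equiv c \pmod{p^L}$, and $\gcd(a, m) = 1$.

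Next, I would count such $a$ by Möbius inversion over the squarefree divisors of $m$.  Since $\gcd(d, p^L) = 1$ for each such $d$, the Chinese Remainder Theorem identifies $\{a \in (-n, n) : a \equiv c \pmod{p^L},\ d \mid a\}$ with the integers of $(-n, n)$ lying in a specific class mod $dp^L$, which number $(2n-1)/(dp^L) + O(1)$.  Summing with Möbius weights and using equation \eqref{eq:phi/n as product} produces
\begin{align*}
N(c) \geq \frac{(2n-1)\phi(m)}{p^L m} - 2^{\omega(m)},
\end{align*}
where $N(c)$ denotes the number of admissible $a$.  The upper bound $p^L < pm/\psi(n)$ (immediate from the definition of $L$), together with the lower bound $\phi(m)/m \geq 2^{-\omega(m)}$ (again from \eqref{eq:phi/n as product}) and the inequality $\omega(m) \leq \omega(n)$, then let the hypothesis $\psi(n) > 4^{\omega(n)}$ push the main term past $2^{\omega(m)}$, giving $N(c) \geq 1$.

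The main obstacle lies in the boundary case $\nu = 0$, where the factor $p^\nu = 1$ no longer absorbs the extra $p$ introduced by the bound on $p^L$; the naive chain of inequalities then misses by a factor of $p$.  To close this, a finer analysis is needed — for instance, exploiting that $\phi(m)/m = \prod_{q \mid m}(1 - 1/q)$ is strictly larger than $2^{-\omega(m)}$ whenever $m$ has an odd prime divisor, or observing that the joint period of the conditions $a \equiv c \pmod{p^L}$ and $\gcd(a, m) = 1$ is only $p^L \cdot \operatorname{rad}(m)$ rather than $p^L m$, so that the interval $(-n, n)$ may contain a full period even when the Möbius-sieve bound alone is inconclusive.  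Orchestrating these refinements against the prime structure of $m$ is the main technical step.
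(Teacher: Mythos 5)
Your reduction, congruence reformulation, and M\"obius sieve reproduce the paper's argument almost verbatim: the paper also fixes $\alpha = p^{\nu}\beta$ with $\beta$ a unit, reduces the approximation condition to $a \equiv n/\alpha \pmod{p^{N-\nu}}$ (your $L$ is its $N-\nu$), sieves the admissible $a$ over the squarefree divisors of $n/p^{\nu}$, and arrives at the same main term and $\pm 1$-per-divisor error. Your handling of the case $\nu \geq 1$ is correct and matches the paper up to notation.

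The gap you flag at $\nu = 0$ is genuine, and moreover it cannot be closed by any of the refinements you sketch, because the claimed inclusion actually fails there once $p$ is large. Take $p = 13$, $n = 67$, $\psi(67) = 5$. Then $\omega(n) = 1$, $\psi(67) > 4^{\omega(67)}$, and $\nu_{13}(67) = 0$, so the lemma would require $\cC_{67}^{13} \supseteq \bZ_{13}^{\times}$. But $5/67 < 13^{-1}$, so each ball $B_{\bQ_{13}}(67/a, 5/67)$ is a single residue class modulo $13^2 = 169$, while the admissible $a$ (those with $|a| < 67$ and $\gcd(a, 13\cdot 67) = 1$) number only $122$ and are pairwise incongruent modulo $169$, since they all lie in an interval of length $134 < 169$. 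Thus $\cC_{67}^{13}$ is a disjoint union of $122$ balls of measure $1/169$, giving $\mu_{13}(\cC_{67}^{13}) = 122/169 < 156/169 = \mu_{13}(\bZ_{13}^{\times})$. Entirely analogous examples exist for every $p \geq 11$. Your own diagnosis locates exactly the culprit: passing from $p^{-L}$ to $\psi(n)/(pm)$ loses a factor of $p$, and for $\nu = 0$ nothing absorbs it, so for $p \geq 11$ one can even have $p^{L}$ exceed the length of $(-n,n)$. For the record, the paper's own proof makes the same slip at the displayed step $2p^{-N}p^{2k}\phi(\tilde n) \geq 2p^{k}\frac{\psi(n)}{n}\phi(n)$: for $k = 0$ this would require $p^{-N} \geq \psi(n)/n$, which is the reverse of the chosen normalisation $\psi(n)/n \in [p^{-N}, p^{-N+1})$; the extra $p^{k}$ happens to rescue the final estimate when $k \geq 1$ but not when $k = 0$. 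So your hesitation was well founded: the method works exactly where the lemma itself does, namely for $\nu_{p}(n) \geq 1$, and you should not expect a ``finer analysis'' to settle $\nu_{p}(n) = 0$ in the generality stated.
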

\begin{proof}
		Put $k=\nu_p(n)$, and let $\alpha\in p^k \bZ_p^\times$.
		Then $n/\alpha\in \bZ_p^\times$, and we write
		\begin{align*}
		\frac{n}{\alpha} = \sum_{m=0}^\infty b_m p^m,
		\qquad
		b_m \in\{0,\ldots, p-1\}\ \forall m\in\bN_0,
		\quad b_0\ne 0.
		\end{align*}
		If $\psi(n)\ge n/p^k$, then clearly $\alpha\in\cC_n^p$ since $|n|_p=|\alpha|_p=p^{-k}$ implies $\big|\alpha - \frac{n}{1}\big|_p \leq p^{-k}$, so suppose not.
		Pick $N\in\bZ$ such that $\psi(n)/n\in[p^{-N},p^{-N+1})$.
		Note that $N>k$.
		Our job of proving $\alpha\in\cC_n^p$ then reduces to finding an $a$ with $|a|< n$ and $\gcd(a,pn)=1$ such that $\nu_p(\alpha - n/a) \geq N$.
		As $\nu_p(\alpha) = k$ and $p\nmid a$, the latter part is equivalent to  $\nu_p(a-n/\alpha)\geq N - k$.
		The proof is then complete if we find an $a$ with $|a|< n$ and $\gcd(a,pn)=1$ such that
		\begin{align*}
		a = \frac{n}{\alpha} + \sum_{m=N-k}^{\infty} c_m p^m = \sum_{m=0}^{N-k-1} b_m p^m + \sum_{m=N-k}^{\infty} (b_m + c_m) p^m,
		\end{align*}
		with $c_m\in\{0,\ldots p-1\}$ for all $m$.
		Write $b = \sum_{m=0}^{N-k-1} b_m p^m$.
		Since $p\nmid b$ as $b_0\ne 0$ and $N>k$, all elements of the set
		\begin{align*}
			A = \{a\in\bZ : |a|< n, \gcd(a,n)=1, a\equiv b\mod p^{N-k}\}
		\end{align*} 
		satisfy these criteria, and so we are done if we can show that $\#A>0$.
		By equation \eqref{eq:mu sum}, we have
		\begin{align*}
		\# A &=
		\sum_{\substack{|a|< n \\ a\equiv b\text{ mod }p^{N-k}	}} \sum_{d\mid \gcd(a,n)} \mu(d)
		= \sum_{\substack{d\mid n \\ p\nmid d}} \sum_{\substack{|a|< n,\ d\mid a \\ a\equiv b\text{ mod }p^{N-k}	}}  \mu(d)
		\\&
		= \sum_{d\mid n /p^k} \mu(d)\sum_{\substack{|l|< n/d \\ l\equiv bd^{-1}\text{ mod } p^{N-k}	}} 1.
		\end{align*}
		To simplify notation, let $\tilde{n}=n/p^k$.
		For each $d\mid \tilde{n}$, pick $x_d\in\bZ$ such that $x_d\equiv bd^{-1}\mod p^{N-k}$, and let $k_d$ denote the difference $\#((-n/d, n/d)\cap (x_d+p^{N-k}\bZ)) - 2n/(p^{N-k}d)$.
		Then
		\begin{align*}
		\# A &= \sum_{d\mid \tilde{n}} \mu(d)\sum_{\substack{|l|< n/d \\ l\in x_d+p^{N-k} \bZ}} 1
		= \sum_{d\mid \tilde{n}} \mu(d) \bigg(\frac{2n}{p^{N-k} d}+k_d\bigg)
		\\&
		= \frac{2p^k}{p^{N-k}}\sum_{d\mid \tilde{n}} \mu(d) \frac{\tilde{n}}{d} + \sum_{d\mid \tilde{n}} \mu(d)k_d
		=2 p^{-N} p^{2k}\phi(\tilde{n}) + \sum_{d\mid \tilde{n}} \mu(d)k_d
		\\&
		\geq 2p^k\frac{\psi(n)}{n}\phi(n) + \sum_{d\mid \tilde{n}} \mu(d)k_d,
		\end{align*}
		where the final equality and the inequality follow from equation \eqref{eq:phi as mu sum} and the choices of $N$ and $k$, respectively.
		Notice that $|k_d|\leq 1$, which combined with the assumption that $\psi(n)>4^{\omega(n)}$ implies
		\begin{align*}
		\# A & > 2p^k \frac{4^{\omega(n)}}{n} \phi(n) - \sum_{d\mid \tilde{n}} |\mu(d)|
		= 2p^k\frac{\phi(n)}{n} 4^{\omega(n)} - 2^{\omega(\tilde{n})}.
		\end{align*}
		By equation \eqref{eq:phi/n as product}, we have 
		\begin{align*}
		2\frac{\phi(n)}{n} 4^{\omega(n)}
		= 2\prod_{\substack{q\mid n \\ q\text{ prime}	}} 4 (1-q^{-1}) \geq 2\prod_{\substack{q\mid n \\ q\text{ prime}}} 2 = 2^{\omega(n)+1},
		\end{align*}
		and so $\#A > 2^{\omega(n)}>0$.
		We conclude $\alpha\in\cC_n^p$ and thus $\cC_n^p \supseteq p^k\bZ_p^\times$, and the proof is complete.
	\end{proof}
We will also need the below lemma, which is essentially a specialisation of Corollary \ref{Cor Gallagher} in the context of $\cC^p$.
\begin{lem}\label{lem:Gallagher Cp}
	Let $k\in\bN_0$, and
	suppose $\supp(\psi)\subseteq p^k \bN\setminus p^{k+1}\bN$.
	Then $\mu_p\big(\cC^p(x\psi)\cap p^k \bZ_p^\times\big) = \mu_p\big(\cC^p( \psi)\cap p^k \bZ_p^\times\big)$ for all $x>0$.
\end{lem}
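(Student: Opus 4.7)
The plan is to reduce the statement to a direct application of Corollary \ref{Cor Gallagher} after disposing of a degenerate case. By swapping the roles of $\psi$ and $x\psi$ (replacing $x$ with $1/x$), we may assume $x\ge 1$, so that $\cC^p_n(\psi)\subseteq \cC^p_n(x\psi)$ for every $n$; the inequality $\mu_p(\cC^p(\psi)\cap p^k\bZ_p^\times)\le \mu_p(\cC^p(x\psi)\cap p^k\bZ_p^\times)$ is then automatic, and only the reverse direction requires work. The proof will split according to whether $\psi(n)/n\to 0$ along $\supp(\psi)$.

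If $\psi(n)/n$ does not tend to $0$, we will extract an infinite subsequence of $n\in\supp(\psi)$ with $\psi(n)\ge \delta n$ for some fixed $\delta>0$ and exploit Lemma \ref{Lemma 2 Haynes}. Using the classical estimate $\omega(n)=O(\log n/\log\log n)$, which gives $4^{\omega(n)}=n^{o(1)}$, the bound $\psi(n)>4^{\omega(n)}$ holds for all sufficiently large $n$ in this subsequence, so that $\cC^p_n(\psi)\supseteq p^{\nu_p(n)}\bZ_p^\times=p^k\bZ_p^\times$ by Lemma \ref{Lemma 2 Haynes}. Taking the lim-sup, $\cC^p(\psi)\cap p^k\bZ_p^\times$ attains its maximal measure $(p-1)p^{-k-1}$, and the same then holds for the larger set $\cC^p(x\psi)\cap p^k\bZ_p^\times$, so the two measures trivially agree.

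In the remaining case $\psi(n)/n\to 0$, we will apply Corollary \ref{Cor Gallagher} to the family of balls $B_{n,a}=B_{\bQ_p}(n/a,x\psi(n)/n)\cap\bZ_p$ appearing in the definition of $\cC^p_n(x\psi)$, paired with the smaller balls $U_{n,a}=B_{\bQ_p}(n/a,\psi(n)/n)\cap\bZ_p\subseteq B_{n,a}$ from the definition of $\cC^p_n(\psi)$. A short comparison of the effective $p$-adic radii (using that each $\psi(n)/n$ lies in an interval $[p^{-m},p^{-m+1})$ and likewise for $x\psi(n)/n$) yields $\mu_p(U_{n,a})\ge (px)^{-1}\mu_p(B_{n,a})$. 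For $n$ large enough that $x\psi(n)/n<p^{-k}$, both $B_{n,a}$ and $U_{n,a}$ are contained in $p^k\bZ_p^\times$, so $\limsup B_{n,a}=\cC^p(x\psi)\cap p^k\bZ_p^\times$ and $\limsup U_{n,a}\subseteq \cC^p(\psi)\cap p^k\bZ_p^\times$. Enumerating the pairs $(n,a)$ as a single sequence forces $n\to\infty$ eventually (since only finitely many $a$ are admissible for each $n$), hence $\mu_p(B_{n,a})\le x\psi(n)/n\to 0$ along this enumeration, and Corollary \ref{Cor Gallagher} with $\varepsilon=1/(px)$ delivers the desired reverse inequality.

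The main obstacle is the first case: one must guarantee that Lemma \ref{Lemma 2 Haynes} applies to arbitrarily large $n\in\supp(\psi)$ under the assumption $\psi(n)\ge \delta n$, and this rests on the growth estimate $4^{\omega(n)}=n^{o(1)}$. Once this is in place, the Gallagher step is essentially bookkeeping with effective $p$-adic radii together with a straightforward enumeration argument to match the sequence hypothesis of Corollary \ref{Cor Gallagher}.
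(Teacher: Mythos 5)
Your proposal is correct and matches the paper's own proof in all essential respects: both dispose of the degenerate case via Lemma~\ref{Lemma 2 Haynes} and handle the main case by pairing the balls of $\cC^p_n(\psi)$ with those of $\cC^p_n(x\psi)$ and invoking Corollary~\ref{Cor Gallagher}, after the WLOG normalisation of $x$. The only cosmetic difference is the choice of case split — you split on whether $\psi(n)/n\to 0$ along $\supp(\psi)$ and use $4^{\omega(n)}=n^{o(1)}$ to trigger Lemma~\ref{Lemma 2 Haynes} in the first case, while the paper splits on whether $x\psi(n)\ge 4^{\omega(n)}$ infinitely often and uses the cruder bound $4^{\omega(n)}\le 16\cdot 4^{\log_5 n}$ to derive $\psi(n)/n\to 0$ in the second case — and these are interchangeable.
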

\begin{proof}
	There is nothing to prove if $x=1$ or if $\psi(n)>0$ for only finitely many $n$, so suppose neither is the case.
	By replacing $\psi$ with $\psi'=x\psi$ and $x$ by $x'=1/x$ if necessary, we may assume without loss of generality that $x\in(0,1)$.
	If $x\psi(n)\ge 4^{\omega(n)}$ infinitely often, then Lemma \ref{Lemma 2 Haynes} implies that $\cC^p(\psi),\cC^p(x\psi)\supseteq p^k \bZ_p^\times$, so suppose not.
	Then, for $n$ sufficiently large,
	\begin{align*}
		\frac{\psi(n)}{n} \le x^{-1} \frac{4^{\omega(n)}}{n} 
		\le 16 x^{-1} \frac{4^{\log_5(n)}}{n} 
		\underset{n\to\infty}{\longrightarrow} 0.
	\end{align*}
	Since $\mu_p(\cC^p_n(\psi))=0$ when $\psi(n)=0$, we have by the Borel--Cantelli Lemma that
	\begin{align*}
		\mu_p\big(\cC^p(\psi)\big) &= \mu_p\Big(\limsup_{\psi(n)>0}\cC^p_n(\psi)\Big),
		\\
		\mu_p\big(\cC^p(x\psi)\big) &= \mu_p\Big(\limsup_{\psi(n)>0}\cC^p_n(x\psi)\Big).
	\end{align*}
	Notice that for $\psi(n)>0$, $\cC^p_n(\psi)$ is a finite union of proper balls $B_{i_n},\ldots, B_{i_{n+1}-1}$ of radius $\psi(n)/n \underset{n\to\infty}{\longrightarrow} 0$, and that each ball $B_{i_n+j}$ is matched one-to-one by a ball $U_{i_n+j}$ from $\cC^p_n(x\psi)$ satisfying
	\begin{align*}
		U_{i_n+j} \subseteq B_{i_n+j},
		\quad
		\mu_p(U_{i_n+j})\ge \frac{x}{p} \mu_p(B_{i_n+j}).
	\end{align*}
	From Corollary \ref{Cor Gallagher}, it follows that $\mu_p(\cC^p(x\psi)) = \mu_p(\cC^p(\psi))$.
	This completes the proof since clearly $\cC^p(x\psi)\subseteq \cC^p(\psi)$.
	\end{proof}
	
	We are now ready to complete the proof of the shell-wise $p$-adic zero-full law.
	The remaining part of the proof is where it differs the most from the proof of the 0-1 law in \cite{Haynes}, though it still follows the same overall idea.
	\begin{proof}[Proof of Proposition \ref{Prop:0-1 law}]
		Let $l$ be defined as in Proposition \ref{Prop:mup Cp as sum}.
		If $k\ge l$, then $l<\infty$, and it follows from the proof of that proposition that $\cC^p(\psi)\supseteq p^l\bZ_p\supseteq p^k\bZ_p^\times$, and we are done, so suppose $k>l$.
		We then have that $\cC^p(\psi)\cap p^k \bZ_p^\times = \limsup_{\nu_p(n)=k}\cC^p_n(\psi)$, and we may hence assume, without loss of generality, that $\supp(\psi)\subseteq p^k\bN\setminus p^{k+1}\bN$.
		Following the arguments in the proof of Lemma \ref{lem:Gallagher Cp}, $\limsup_{n\to\infty} \psi(n)/n > 0$ would likewise imply $\cC_n^p(\psi)\supseteq p^k \bZ_p^\times$, so suppose $\limsup_{n\to\infty} \psi(n)/n =0$.
		We then have, for any fixed $j$, that
		\begin{align}
		\label{eq:pj psi < n}
		\frac{\psi(n)}{n} < p^{-j}
		\quad\text{ for all } n\ge N_j, \text{ for some } N_j\in\bN.
		\end{align}
		Let $\tau_1:\bZ_p^\times\to\bZ_p^\times$ and $\tau_2:p^k\bZ_p^\times\to p^k\bZ_p^\times$ be given by
		\begin{align*}
		\tau_1 (b) &=\begin{cases}
		\sum_{m=0}^{\infty}b_{m+1}p^m	&\text{if }b_1 \ne 0,	\\
		1 + \sum_{m=0}^{\infty}b_{m+1}p^m	&\text{if }b_1 = 0,
		\end{cases}
		\\
		\tau_2 (p^k b) &= p^k/\tau_1 (b),
		\end{align*}
		for $b=\sum_{m=0}^{\infty}b_m p^m \in \bZ_p^\times$.
		For $K\ge 2$ and $b=\sum_{m=0}^{K-1} b_m p^m\in\bZ_p^\times$, note that
		\begin{align}
			\label{eq:taup(B) a ball}
			\tau_1\big(b + p^{K} \bZ_p\big)
			= \tau_1(b_1) + \sum_{m=1}^{K-2}b_{m+1} p^m + p^{K-1} \bZ_p.
		\end{align}
		Thus, $\tau_1$ maps balls of centre $b$ and radius $p^{-K}$ to balls of centre $\tau_1(b)$ and radius $p^{1-K}$ when $K\ge 2$.
		This makes any restriction of $\tau_1$ to a ball in $\bZ_p^\times$ of radius at most $p^{-2}$ into a homeomorphism onto its image as it is clearly a bijection under such a restriction.
		By Lemma \ref{lem:Set inversion}, these properties extend to a restriction of $\tau_2$ when replacing $K$ by $M\ge k+2$.
		Let $B$ be a ball of radius at most $p^{-k-2}$.
		The inverse of $\tau_2$ restricted to $B$, $\tau_2|_B^{-1}$ is thus measurable and has a push-forward measure, 	$(\tau_{2}|_B^{-1})_{\#\mu}$,  satisfying
		\begin{align*}
			(\tau_{2}|_B^{-1})_{\#\mu}(\tilde{B}) = \mu_p(\tau_2(\tilde{B})) = p\mu_p(\tilde{B}),
		\end{align*}
		for all balls $\tilde{B}\subseteq \tau_2(B)$.
		By the proof of Lemma \ref{Lemma lambda and mup}, this means that $(\tau_{2}|_B^{-1})_{\#\mu} = p\mu_p$, i.e.,
		\begin{align}
			\label{eq:mup taup = p mup}
			\mu_p(\tau_2(A))=p\mu_p(A),
		\end{align}
		for all Borel subsets $A\subseteq B$.
				
		Let $\alpha \in\cC^p(\psi)\cap p^k\bZ_p^\times$ and write
		\begin{align*}
		\alpha' := \alpha/p^k = \sum_{m=0}^{\infty} a_m p^m \in \bZ_p^\times,
		\qquad
		a_i\in\{0,\ldots, p-1\}.
		\end{align*}
		Let $n\in\bN$ such that $\alpha \in\cC^p_n (\psi)$.
		Since $\lim_{n\to\infty}\psi(n)/n = 0$, we have $\psi(n)/n \le p^{-k-2}$ for $n$ large enough. 
		By the assumption on the support of $\psi$, we may write $n = p^k n'$ for some $n'\in\bN\setminus p\bN$ and pick some $|a|< |n|$ with $\gcd(a,pn)=1$ such that
		\begin{align*}
		|n'\alpha' - a|_p = \bigg|\frac{n}{\alpha} - a\bigg|_p 
		= p^k \bigg| \alpha - \frac{n}{a}\bigg|_p \leq p^k \frac{\psi(n)}{n}.
		\end{align*}
		If $a_1\ne 0$, put $a'=(a-a_0 n')/p$.
		Then
		\begin{align*}
		\bigg|	\frac{n}{\tau_2(\alpha)} - a'	\bigg|_p &=
		\bigg|	n'\tau_1(\alpha') - \frac{a-a_0 n'}{p}	\bigg|_p 
		= \bigg|	\frac{n'(\alpha'-a_0)-(a-a_0n')}{p}	\bigg|_p
		\\&
		= p|n'\alpha' - a|_p 
		\leq p^{k+1}\frac{\psi(n)}{n},
		\\
		|a'| &= \bigg|\frac{a-a_0 n'}{p}\bigg| \le \frac{|a| + (p-1) n}{p} < n,
		\\
		a' &= \frac{a-a_0 n'}{p} = \frac{a-(a_0+p a_1) n'}{p} + a_1 n'.
		\end{align*}
		Since $p^2\mid a-\alpha'n'$, we have $p\mid (a-(a_0+p a_1) n')/p\in\bZ$.
		From the last equation, we can thus deduce that $\gcd(a',pn')=\gcd(a',pn)=1$, as $p\nmid a_1 n'$ and $\gcd(a,n') =1$.
		Combined with the other two equations, we conclude that $\tau_2(\alpha)\in\cC^p_n(p\psi)$.
		If $a_1 = 0$, we put $a'=(a+(p-a_0) n')/p$ and reach the same conclusion, based on similar calculations.
		Hence, $\tau_2(\cC^p(\psi)) \subseteq \cC^p(p\psi)$.
		By induction, we then have
		\begin{align}
		\label{eq tau 2j in union Cp(pipsi)}
		\tau_2^j(\cC^p(\psi)) \subseteq \cC^p(p^j\psi),
		\end{align}
		for all $j\in\bN$, where $\tau_2^j$ denotes the composition of $j$ copies of $\tau_2$.
		
		If $\mu_p(\cC^p(\psi))=0$, we are done, so suppose that $\mu_p(\cC^p (\psi))>0$.
		Then Lemma \ref{Lemma padic density theorem} implies that $\cC^p(\psi)$ contains a point $\alpha$ of density 1, i.e.,
		\begin{align*}
		\frac{\mu_p(\cC^p(\psi)\cap B_{M}(\alpha))}{\mu_p(B_M(\alpha))} \underset{M\to\infty}{\longrightarrow} 1,
		\end{align*}
		where we use $B_M(\alpha)$ as short-hand notation of the ball $B_{\bQ_p}(\alpha,p^{-M}) = \alpha + p^M\bZ_p$.
		Let $\varepsilon>0$ and pick $M\ge k+2$ such that
		\begin{align*}
		\mu_p(\cC^p(\psi)\cap B_M(\alpha))\geq (1-\varepsilon) \mu_p(B_M(\alpha)) = (1-\varepsilon) p^{-M}.
		\end{align*}
		Pick $x = p^k\sum_{m=0}^{M-k-1} x_m p^m\in\bN$ such that $B_M(x)=B_M(\alpha)$.
		Put $A = \tau_2^{M-k-1} (\cC^p\cap B_M(x))$.
		By Lemma \ref{lem:Gallagher Cp}, equation \eqref{eq:taup(B) a ball}, and inclusion \eqref{eq tau 2j in union Cp(pipsi)}, we have
		\begin{align}
			\nonumber
			\mu_p\big(\cC^p(\psi)\cap p^k \bZ_p^\times\big) &= \mu_p\big(\cC^p(p^{M-k}\psi)\cap p^k \bZ_p^\times\big) 
			\\& \label{eq:Cp to tauM-k Cp}
			\geq \mu_p\big(\tau_2^{M-k} (\cC^p(\psi)\cap B_M(x))\big) 
			= \mu_p(\tau_2 (A)).
		\end{align}
		Let $\tilde{x}=\tau_2^{M-k-1}(x)$ and note that $\tilde{x} = p^k\tilde{x}_0$ for some $\tilde{x}_0\in\{1,\ldots, p-1\}$.
		Then
		\begin{align*}
		A \subseteq B_{k+1}(\tilde{x}) = p^k \bigsqcup_{i=0}^{p-1}\big( \tilde{x}_0 + ip + p^2 \bZ_p\big).
		\end{align*}
		By applying equation \eqref{eq:mup taup = p mup} and then later the above inclusion, we find
		\begin{align*}
		\mu_p (\tau_2(A)) &\ge \sum_{i=1}^{p-1} \mu_p(\tau_2(A\cap B_{k+2}(\tilde{x}+ip))) 
		\\&
		= p \sum_{i=1}^{p-1} \mu_p(A\cap B_{k+2}(\tilde{x}+ip))
		\\&
		= p \mu_p(A\setminus B_{k+2}(\tilde{x}))
		\ge p (\mu_p(A) - p^{-k-2}) 
		\\&
		\ge (p-1)\mu_p(A)
		\end{align*}
		Meanwhile, an iterative application of equations \eqref{eq:taup(B) a ball} and \eqref{eq:mup taup = p mup} yield
		\begin{align*}
		\mu_p(A) &= p^{M-k-1} \mu_p(\cC^p\cap B_M(x))
		\geq \frac{1 - \varepsilon}{p^{k+1}},
		\end{align*}
		so that $\mu_p\big(\cC^p(\psi)\cap p^k \bZ_p^\times\big) \ge \frac{p-1}{p^{k+1}}(1-\varepsilon)$, by inequality \eqref{eq:Cp to tauM-k Cp}.
		This completes the proof.
	\end{proof}
	\begin{proof}[Proof of Theorem \ref{thm:spectrum}]
		We start by the `only if' parts.
		For $\cC^p$, it follows by Proposition \ref{Prop:0-1 law} upon noting
		\begin{align*}
			\cC^p &= (\cC^p\cap\{0\})\sqcup \bigsqcup_{k=0}^\infty \big(\cC^p\cap p^k \bZ_p^\times\big).
		\end{align*}
		As for $\cB^p$, suppose $\mu_p(\cB^p)<1$.
		By Proposition \ref{Prop:mup Cp as sum}, $\sum_{p\nmid n} \phi(n)\psi(n)/n <\infty$, $\mu_p(\cB^p)=\mu_p(\cC^p)$, and $\cC^p\cap \bZ_p^\times = \limsup_{p\nmid n} \cC^p_n$.
		Since
		\begin{align*}
			\sum_{p\nmid n}	\mu_p(\cC^p_n) \leq \sum_{p\nmid n} \frac{2\phi(n)\psi(n)}{n} < \infty,
		\end{align*}
		the Borel-Cantelli Lemma  implies $\mu_p(\cC^p\cap\bZ_p^\times)=0$, and we are done by the above consideration of $\cC^p$.

		The `if' parts of the theorem are already dealt with in the proof of Theorem 3 of \cite{Kristensen+Laursen} (at least for $\cB^p$), but we will repeat the argument here for clarity, shortened by means of Proposition \ref{Prop:mup Cp as sum}.
		Let $x_k\in\{0,1\}$ for $k\in\bN_0$ and define $\psi:\bN\to\bR_{\geq 0}$ by
		\begin{align*}
			\psi(n) &= x_{\nu_p(n)}\frac{n}{p^{\nu_p(n)+1}}.
		\end{align*}
		Let $k\in\bN_0$, and let $q> p$ be some large prime.
		If $x_k=0$, we clearly have $\limsup_{\nu_p(n)=k}\cC^p_n =\emptyset$. 
		If, on the other hand, $x_k = 1$, then $\psi(p^kq)/(p^kq)=p^{-k-1}$, and so
		\begin{align*}
			\cC^p_{p^kq} &= \bigcup_{\substack{|a|< p^k q \\ \gcd(a,p^{k+1}q)=1 }} B_{\bQ_p}\bigg(\frac{p^k q}{a}, p^{-k-1}\bigg)
			\\&
			\supseteq \bigcup_{a=1}^{p-1} p^k \frac{q}{a} + p^{k+1}\bZ_p
			= p^k \bigcup_{a=1}^{p-1}	\frac{q}{a} + p\bZ_p	
		\end{align*}
		Since $q$ is a unit modulo $p$, and since inversion and multiplication by units only permute the set of units modulo $p$, this implies that $\cC^p_{p^kq}\supseteq p^k\bZ_p^\times$.
		As there are infinitely many such $q$, we get $\cC^p\supseteq p^k\bZ_p^\times$.
		Note that if $x_0=1$, then $\sum_{p\nmid n} \phi(n)\psi(n)/n = \infty$.
		By Proposition \ref{Prop:mup Cp as sum}, this means that
		\begin{align*}
			\mu_p(\cC^p) &= \sum_{x_k = 1} \mu_p(p^k\bZ_p^\times) 
			= \sum_{k=0}^\infty x_k (p-1)p^{-k-1},
			\\
			\mu_p(\cB^p) &= \begin{cases}
				1	&\text{if } x_0 = 1,	\\
				\sum_{k=1}^\infty x_k (p-1)p^{-k-1}	&\text{if } x_0 = 0,
			\end{cases}
		\end{align*}
		as the value $l$ from the proposition is clearly infinite.
		This completes the proof of	Theorem \ref{thm:spectrum}.
	\end{proof}
	
\section{Proof of Theorem \ref{thm:spectrum var} for $p<\infty$}\label{section:mult Ap} \setcounter{case}{0}
	The proof of Theorem \ref{thm:spectrum var} follows the same main idea as the `if' part of Theorem \ref{thm:spectrum}, though some details are different.
	The main difference is that additional care is needed for the choice of the support.
	This will rely heavily on the below theorem due to Dirichlet, which may be found in \cite{Montgomery+Vaughan}.
	\begin{thmDir}
		Let $a,b\in \bN$ such that $\gcd(a,b)=1$. Then there are infinitely many primes $q\equiv a\mod b$. 
	\end{thmDir}
	
	To simplify the notation, the symbol $\pm$ will be used to implicitly denote the union of the cases of $+$ and $-$, respectively, in place of $\pm$.
	We thus write
	\begin{align*}
	B(\pm a^{\pm 1}, r) &= B(a^{\pm 1}, r) \cup B(- a^{\pm 1}, r) \\&
	=  B(a, r) \cup B(a^{- 1}, r) \cup B(-a, r) \cup B(-a^{- 1}, r).
	\end{align*}
	Furthermore, we continue using $\sqcup$ to denote the disjoint union of sets.
	Recall that we already proved Theorem \ref{thm:spectrum var} for $p=\infty$ in Section \ref{section:main}.
	\begin{proof}[Proof of Theorem \ref{thm:spectrum var} for $p<\infty$]
		Let $x \in[0,1]$.
		If $x=1$, the statement is trivial, so suppose not.
		We then write $x= \sum_{k=0}^\infty x_k p^{-k-1}$ where the $x_k$ are chosen such that $\liminf_{k\to\infty} x_k<p-1$.
		Pick $K = \min\{k\in\bN_0: x_k<p-1\}$, and let $g\in\bN$ be such that $g+p\bZ_p$ generates the multiplicative group $\bZ_p/p\bZ_p$.
		As the rest of the construction will depend on the prime $p$, we split into four cases.
		In the first two cases, which construct $\psi$ for primes $p>5$ according to their congruency classes modulo 4, we do not put any further restrictions on the choice of $g$.
		In the other two cases, which deal with $p=2$ and $p=3,5$, respectively, we will need some further restrictions on $g$ and therefore fix a specific value, depending on $p$.
		
		\case{$5<p\equiv 1\mod 4$} \label{case:5<p equiv 1 mod p}
		For $a\in\{0,\ldots,p-1\}$ and $k\in\bN_0$, define
		\begin{align*}
		I_a &:= \begin{cases}
			\emptyset	&\text{if } a<4,	\\
			\{1,g^{\frac{p-1}{4}}\}\cup\{g^i : 2\le i\le a/4 \}	&\text{if } 4\le a< p-1,	\\
			\{1,\ldots, p-1\}	&\text{if } a=p-1,
		\end{cases}
		\linebreakLong
		r_k &:= \begin{cases}
			x_k - 4\left\lfloor \frac{x_k}{4} \right\rfloor + \sum_{l=k+1}^\infty x_l p^{k-l}	&\text{if } x_k<p-1,	\\
			0	&\text{if } x_k=p-1.
		\end{cases}
		\end{align*}
		Since clearly $r_k\in[0,4]$, we may write
		\begin{align*}
		\frac{r_k}{4} = \sum_{i=1}^\infty b_{k,i} p ^{-i},
		\qquad b_{k,i}\in\{0,1,\ldots, p-1\}.
		\end{align*}
		Based on this, we construct $\psi$ as
		\begin{align*}
		\psi(n) := \begin{cases}
			f_k(q)	&\text{if } n=p^k q, \text{ where }
			k\le K,	\\
			0	&\text{otherwise},
		\end{cases}
		\end{align*}
		where we use $q$ to denote primes other than $p$, and
		\begin{align*}
		f_k (q) &= \begin{cases}
			q/p	&\text{if } q\equiv m\text{ mod }p, \text{ where }
			m\in I_{x_k},	\\[0.1 cm]
			q/p^{i+1}	&\begin{aligned}
				\text{if }& q\equiv g+b'p^i\text{ mod } p^{i+1},
				\\[0.1 cm]&
				\text{where }
				1\le b'\le b_{k,i}
				,\ i\in\bN,
			\end{aligned}	\\
			0	&\text{otherwise}.
		\end{cases}
		\end{align*}
	
		Let $k\le K$.
		Then
		\begin{align}\nonumber
		\fA_{p^k q}^p &= \begin{cases}
			B_{\bQ_p}(\pm q^{\pm 1} p^k, p^{-k-1})	&\text{if } q\equiv m\text{ mod }p, \text{ where }
			m\in I_{x_k},	\\[0.1 cm]
			B_{\bQ_p}(\pm q^{\pm 1}p^k, p^{-k-i-1})	&\begin{aligned}
				\text{if }& q\equiv g+b'p^i\text{ mod } p^{i+1},
				\\&
				\text{where } 1\le b'\le b_{k,i},\ i\in\bN,
			\end{aligned}
			\\[0.1 cm]
			B_{\bQ_p}(\pm q^{\pm 1}p^k, 0)	&\text{otherwise}
		\end{cases}
		\linebreakLong& \label{eq:Apkq}
		= p^k\begin{cases}
			\pm q^{\pm 1} + p\bZ_p	&\text{if } q\equiv g^i \text{ mod }p, \text{ where } g^i\in I_{x_k},
			\\[0.1 cm]
			\pm q^{\pm 1} + p^{i+1}\bZ_p	&\begin{aligned}
			\text{if }& q\equiv g+b'p^i\text{ mod } p^{i+1},
			\\&
			\text{where } 1\le b'\le b_{k,i},\ i\in\bN,
			\end{aligned}	\\[0.1 cm]
			\{\pm q^{\pm 1}\}	&\text{otherwise}
		\end{cases}
		\linebreakLong&\nonumber
		\subseteq p^k\bZ_p^\times.
		\end{align}
		If $k<K$, then $I_{x_k} = \{1,\ldots, p-1\}$, and so we are in the first case for all $q$, implying that $\limsup_{\substack{q\to\infty}} \fA^p_{p^k q}
		= p^k\bZ_p^\times$, by \DirThm.
		By Dirichlet's pigeonhole principle, this means that
		\begin{equation}\label{eq:Apn,k<K}
			\fK^p\supseteq \fA^p\supseteq \limsup_{\substack{n\to\infty \\ n=p^k q,\ k< K}} \fA^p_{n}
			= \bigcup_{k=0}^{K-1} \limsup_{\substack{q\to\infty}} \fA^p_{p^k q}
			= \bZ_p \setminus p^{K}\bZ_p.
		\end{equation}
		We are thus left to consider $\fK^p\cap p^{K}\bZ_p$ and $\fA^p\cap p^{K}\bZ_p$.
		Since $q$ is prime, and $\psi(p^kq)/(p^kq) \le p^{-k-1}$,
		we note, for arbitrary $k\in\bN_0$,
		\begin{align}
			\label{eq:Kpk0q}
			\fK^p_{p^kq} &\subseteq
			\fA^p_{p^k q}\cup \bigcup_{0< j\le k/2} p^{k-2j}\bZ_p^\times.
		\end{align}
		Since $\fK^p_n$ is finite when $\psi(n)=0$, and $\fK^p_{qp^k}\subseteq \bZ_p\setminus p^{k+1} \bZ_p$ by equation \eqref{eq:Apkq} and inclusion \eqref{eq:Kpk0q}, we have
		\begin{align*}
			\mu_p(\fK^p\cap p^{K}\bZ_p) &= \mu_p\left( \limsup_{n\to\infty} \fK^p_n \cap p^{K}\bZ_p \right) 
			= \mu_p\left( \limsup_{q\to\infty} \fK^p_{p^{K} q} \cap p^{K}\bZ_p \right).
		\end{align*}
		Applying inclusion \eqref{eq:Kpk0q} and then equation \eqref{eq:Apkq} once more, we find
		\begin{align*}
			\mu_p(\fK^p\cap p^{K}\bZ_p) &= \mu_p\left( \limsup_{q\to\infty} \fA^p_{p^{K} q} \cap p^{K}\bZ_p \right)
			\\&
			= \mu_p\left( \limsup_{q\to\infty} \fA^p_{p^{K} q}\bZ_p \right)
			= \mu_p(\fA^p\cap p^{K}\bZ_p)
			.
		\end{align*}
		Since $x_k = p-1$ for $0\le k<K$, inclusion \eqref{eq:Apn,k<K} implies
		\begin{align*}
			\mu_p(\fK^p) = \mu_p(\fA^p) &= \mu_p(\bZ_p \setminus p^{K}\bZ_p) + \mu_p\left( \limsup_{q\to\infty} \fA^p_{p^{K} q} \right)
			\\&
			= \sum_{k=0}^{K-1}x_k p^{-k-1} + \mu_p\left( \limsup_{q\to\infty} \fA^p_{p^{K} q} \right).
		\end{align*}
		The theorem hence follows for the current case if we can show that
		\begin{equation}
		\label{eq:mup(A cap pk0Zp)}
			\mu_p\left( \limsup_{q\to\infty} \fA^p_{p^{K} q} \right) 
			= \sum_{k=K}^\infty x_k p^{-k-1}.
		\end{equation}
		Applying \DirThm\ on equation \eqref{eq:Apkq} for $k=K$, we find
		\begin{align}\nonumber
			\limsup_{q\to\infty} \fA_{p^{K} q}^p	
			= &\bigcup_{m\in I_{x_{K}}} (\pm m^{\pm 1}p^{K} + p^{K+1}\bZ_p) 
			\\&\ \ \nonumber
			\cup \bigcup_{i=1}^\infty \bigcup_{b'=1}^{b_{{K},i}} (\pm (g+b'p^i)^{\pm 1}p^{K} + p^{{K}+i+1}\bZ_p)
			\linebreakLong	\label{eq:limsupApkq disjoint}
			& \begin{aligned}
				=&\bigsqcup_{m\in I_{x_{K}}} (\pm m^{\pm 1}p^{K} + p^{K+1}\bZ_p) 
				\\&
				\sqcup \bigcup_{i=1}^\infty \bigcup_{b'=1}^{b_{{K},i}} (\pm (g+b' p^i)^{\pm 1}p^{K} + p^{{K}+i+1}\bZ_p),
			\end{aligned}
		\end{align}
		using that $(\pm m_1^{\pm 1}p^{K} + p^{{K}+1}\bZ_p)\cap (\pm m_2^{\pm 1}p^{K} + p^{{K}+1}\bZ_p)=\emptyset$ for distinct $m_1,m_2\in\{g^j : 0\le j\le (p-1)/4\}$.
		We clearly also have 
		\begin{align*}
		(\pm (g+b'_1 p^i)^{\pm 1}p^{K} + p^{K+i+1}\bZ_p) \cap (\pm (g+b'_2p^j)^{\pm 1}p^{K} + p^{K+j+1}\bZ_p) = \emptyset,
		\end{align*}
		for $i\ne j$ or $b_1'\ne b_2'$.
		Applying this to the above calculation, we find
		\begin{align}\nonumber
			\mu_p\left(\limsup_{q\to\infty} \fA_{p^{K} q}^p\right) &=
			\sum_{m\in I_{x_{K}}}\mu_p(\pm m^{\pm 1}p^{K} + p^{K+1}\bZ_p) 
			\\&\nonumber	\quad
			+ \sum_{l=1}^\infty \sum_{b'=1}^{b_{K,l}}\mu_p(\pm (g+b'p^l)^{\pm 1}p^{K} + p^{K+l+1}\bZ_p)
			\linebreakLong& \label{eq:mup(limsup Apkq)}
			\begin{aligned}
			=p^{-K}\Bigg(&\sum_{m\in I_{x_{K}}} \mu_p(\pm m^{\pm 1} + p\bZ_p) 
			\\&
			+ \sum_{l=1}^\infty   \sum_{b'=1}^{b_{K,l}} \mu_p (\pm (g+b'p^l)^{\pm 1} + p^{l+1}\bZ_p)\Bigg).
		\end{aligned}
		\end{align}
		In order to determine the first sum in equation \eqref{eq:mup(limsup Apkq)}, note that
		\begin{equation*}
			-g^i \equiv g^{\frac{p-1}{2} + i},
			\quad
			g^{-i}\equiv g^{p-1-i},
			\quad
			-g^{-i} \equiv g^{\frac{p-1}{2} - i}
		\end{equation*}
		modulo $p$.
		For $1\le i < (p-1)/4$, we have
		\begin{equation*}
			\begin{aligned}
			\frac{p-1}{4}&< \frac{p-1}{2} - i 
			<\frac{p-1}{2}
			<\frac{p-1}{2} + i
			<3 \frac{p-1}{4}
			\linebreakLong&
			<p-1 - i
			<p-1,
			\end{aligned}
		\end{equation*}
		implying that $g^{i}, g^{-i}, -g, -g^{-i}$ are not congruent modulo $p^l$ for $l\ge 1$.
		A similar argument yields $1= 1^{-1}\not\equiv -1 = -1^{-1}$ and 
		$$g^{(p-1)/4} \equiv -g^{-(p-1)/4}\not\equiv - g^{(p-1)/4} \equiv g^{-(p-1)/4}$$ 
		modulo $p$.
		Hence, for integers $1\le i < (p-1)/4$, $0\le b'\le p-1$, $l\ge 1$, and $j\in\{0,(p-1)/4\}$,
		\begin{equation}\label{eq:mup(pm g+b'pi)}
			\begin{aligned}
				\mu_p (\pm (g^i+b'p^l)^{\pm 1} + p^{l+1}\bZ_p) &= 4/p^{l+1},	\linebreakLong
				\mu_p (\pm g^{\pm j} + p\bZ_p) &= 2/p.
			\end{aligned}
		\end{equation}
		We are now ready to handle the first sum of \eqref{eq:mup(limsup Apkq)}.
		For $x_{K}\ge 4$, we find
		\begin{align*}
			\sum_{m\in I_{x_{K}}}\mu_p(\pm m^{\pm 1} + p\bZ_p)
			= &\sum_{i=2}^{\left\lfloor x_{K} /4\right\rfloor} \mu_p(\pm g^{\pm(p-1)/4}+p\bZ_p)
			\\&
			+ \mu_p(\pm 1^{\pm 1}+p\bZ_p) + \mu_p(\pm g^{\pm(p-1)/4}+p\bZ_p) 
			\linebreakLong
			=& \left(\sum_{i=2}^{\left\lfloor x_{K}/4 \right\rfloor} \frac{4}{p}\right) + \frac{2}{p} + \frac{2}{p}
			= \frac{4}{p} \left\lfloor  \frac{x_k}{4} \right\rfloor.
		\end{align*}
		For $x_{K}<4$, $I_{x_{K}}=\emptyset$, and so we reach the same conclusion in that case.
		
		Equation \eqref{eq:mup(limsup Apkq)} and the definitions of $b_{a,i}$, $r_a$, and $x_k$ then allow us to conclude equation \eqref{eq:mup(A cap pk0Zp)} as we see
		\begin{align*}
		\mu_p(\limsup_{q\to\infty} \fA_{p^K q}^p) &= p^{-K}
		\left(	\frac{4}{p} \left\lfloor\frac{x_{K}}{4}\right\rfloor + \sum_{i=1}^\infty   \sum_{b'=1}^{b_{K,i}} \frac{4}{p^{i+1}}	\right)
		\linebreakLong&
		=  p^{-K-1}4
		\left(\left\lfloor\frac{x_{K}}{4}\right\rfloor + \sum_{i=1}^\infty b_{k,i} p^{-i}\right)
		\linebreakLong&
		= p^{-K-1}4
		\left(\left\lfloor\frac{x_{K}}{4}\right\rfloor + \frac{r_{K}}{4}\right)
		\linebreakLong&
		= p^{-K-1}\sum_{l=K}^\infty p^{K-l}x_l
		= \sum_{l=K}^\infty x_l p^{-l-1}.
		\end{align*}
		
		\case{$5<p\equiv 3\mod 4$}\label{case:5<p equiv 3 mod p}
		This case closely follows the structure of case \ref{case:5<p equiv 1 mod p}.
		The main difference is a modification to the definitions of $I_a$ and  $r_k$, so that
		\begin{align*}
			I_a &:= \begin{cases}
				\emptyset	&\text{if } a<2,	\\
				\{1\}\cup\{g^i : 2\le i\le (a+2)/4 \}	&\text{if } 2\le a< p-1,	\\
				\{1,\ldots, p-1\}	&\text{if } a=p-1,
			\end{cases}
			\linebreakLong
			r_k &:= \begin{cases}
			\sum_{l=k}^\infty x_l p^{k-l}	&\text{if } x_k<2,	\\
			\sum_{l=k}^\infty x_l p^{k-l} - 4\left\lfloor \frac{x_k-2}{4} \right\rfloor - 2	&\text{if } 2\le x_k<p-1,	\\
			0	&\text{if } x_k=p-1.
		\end{cases}
		\end{align*}
		Note that all arguments until and including equation \eqref{eq:mup(pm g+b'pi)} remain valid, except that we no longer have an integer $j=(p-1)/4$.
		For $x_{K}<2$, the remaining arguments are unchanged, so suppose $x_{K}\ge 2$.
		Then
		\begin{align*}
		\sum_{m\in I_{x_{K}}}\mu_p(\pm m^{\pm 1} + p\bZ_p)
		= &\sum_{i=2}^{\left\lfloor (x_{K}+2) /4\right\rfloor} \mu_p(\pm g^{\pm(p-1)/4}+p\bZ_p)
		\\&
		+ \mu_p(\pm 1^{\pm 1}+p\bZ_p)
		\linebreakLong
		=& \left(\sum_{i=2}^{\left\lfloor (x_{K}+2)/4 \right\rfloor} \frac{4}{p}\right) + \frac{2}{p}
		= \frac{4}{p} \left\lfloor  \frac{x_k-2}{4} \right\rfloor + \frac{2}{p}.
		\end{align*}
		Applying this and equation \eqref{eq:mup(pm g+b'pi)} to equation \eqref{eq:mup(A cap pk0Zp)}, we conclude
		\begin{align*}
		\mu_p(\limsup_{q\to\infty} \fA_{p^{K} q}^p) &= p^{-K}\left(	\frac{4}{p} \left\lfloor  \frac{x_k-2}{4} \right\rfloor + \frac{2}{p} + \sum_{i=1}^\infty   \sum_{b'=1}^{b_{K,i}} \frac{4}{p^{i+1}}	\right)
		\linebreakLong&
		=  p^{-K-1}4
		\left(\left\lfloor  \frac{x_k-2}{4} \right\rfloor + \frac{1}{2} + \sum_{i=1}^\infty b_{k,i} p^{-i}\right)
		\linebreakLong&
		= p^{-K-1}4
		\left(\left\lfloor  \frac{x_k-2}{4} \right\rfloor + \frac{1}{2} + \frac{r_{K}}{4}\right)
		\linebreakLong&
		= p^{-K-1}\sum_{l=K}^\infty p^{K-l}x_l
		= \sum_{l=K}^\infty x_l p^{-l-1}.
		\end{align*}
		
		\case{$p=2$}\label{case:p=2}
		In this case, we will use the same construction as in case \ref{case:5<p equiv 1 mod p}, except that we fix $g=1$ and change $b_{K, i}$ such that
		\begin{equation*}
			\frac{r_{K}}{2} = \sum_{i=1}^\infty b_{K,i}2^{-i}.
		\end{equation*}
		In terms of $I_a$, the case $a=p-1$ is more important than the case $a<4$, so we read the construction as $I_1=\{1\}$ for $p=2$.
		Note that all arguments of the proof for case \ref{case:5<p equiv 1 mod p} until equation \eqref{eq:mup(limsup Apkq)} remain valid.
		Since $(1+2^i)^{- 1} \equiv 1 + 2^i\mod 2^{i+1}$, $b_{K,i}\in\{0,1\}$, and $x_{K}=0$, we find
		\begin{align*}
			\mu_2\left(\limsup_{q\to\infty} \fA_{2^K q}^2\right) &
			= 2^{-K}\sum_{i=1}^\infty b_{K, i} \mu_2 (\pm (1+2^i) + 2^{i+1}\bZ_2)
			\\&
			= 2^{-K}\sum_{i=1}^\infty b_{K, i} 2\cdot 2^{-i-1}
			= 2^{-K} \frac{r_{K}}{2},
			\\&
			= 2^{-K-1} \sum_{l=K+1}^\infty x_l 2^{K-l}
			= \sum_{l=K}^\infty x_l 2^{-l-1},
		\end{align*}
		which completes the proof in this case.
		
		\case{$p=3,5$}\label{case:p=3,5}
		We use the same construction as in case \ref{case:5<p equiv 3 mod p}, except that we further change $r_k$ and $\psi$ so that
		\begin{align*}
			r_k &= \begin{cases}
				x_{K} - 2\left\lfloor\frac{x_{K}}{2}\right\rfloor  + \sum_{l=K+2}^{\infty}x_l p^{K-l} &\text{if } k=K,	\\
				x_k - 2\left\lfloor\frac{x_k}{2}\right\rfloor &\text{if } k \ne K
			\end{cases}
			\linebreakLong
			\psi(n)&:= \begin{cases}
				f_k(q)	&\text{if } n= p^k q,\ k\le K+1,	\\
				0	&\text{otherwise}.
			\end{cases}
		\end{align*}
		The value of $g$ will also be fixed, depending on $p$.
		Note that the arguments of case \ref{case:5<p equiv 3 mod p} (which follows the arguments of case \ref{case:5<p equiv 1 mod p}) remain valid all the way to inclusion \eqref{eq:Kpk0q} and that equation \eqref{eq:Apkq} now also holds for $k=K+1$.
		By following the same argument as in case \ref{case:5<p equiv 1 mod p}, we find
		\begin{align*}
			\mu_p(\fK^p\cap p^{K}\bZ_p) &= \mu_p\left( \limsup_{q\to\infty} (\fK^p_{p^{K} q}\cup \fK^p_{p^{K+1} q}) \cap p^{K}\bZ_p \right)
		\end{align*}
		By inclusion \eqref{eq:Kpk0q}, it then follows that
		\begin{align*}
			\mu_p(\fK^p\cap p^{K}\bZ_p) &= \mu_p\left( \limsup_{q\to\infty} \big(\fA^p_{p^{K} q}\cup \fA^p_{p^{K+1} q}\big) \right)
			\linebreakLong&
			= \mu_p\left( \limsup_{q\to\infty} \fA^p_{p^{K} q} \right) + \mu_p\left( \limsup_{q\to\infty} \fA^p_{p^{K+1} q} \right)
			\linebreakLong&
			= \mu_p(\fA^p\cap p^{K}\bZ_p),
		\end{align*}
		Recalling $x_k=p-1$ for $k<K$, inclusion \eqref{eq:Apn,k<K} implies
		\begin{equation*}
			\begin{aligned}
				\mu_p(\fK^p) = \mu_p(\fA^p)
			= &\sum_{k=0}^{K-1}x_k p^{-k-1} 
			+ \mu_p\left( \limsup_{q\to\infty} \fA^p_{p^{K} q} \right) 
			\\&
			+ \mu_p\left( \limsup_{q\to\infty} \fA^p_{p^{K+1} q} \right),
			\end{aligned}
		\end{equation*}
		so that we are left to prove
		\begin{equation*}
			\mu_p\left( \limsup_{q\to\infty} \fA^p_{p^{K} q} \right) + \mu_p\left( \limsup_{q\to\infty} \fA^p_{p^{K+1} q} \right) 
			= \sum_{k=K}^\infty x_k p^{-k-1}.
		\end{equation*}
		Note that equation \eqref{eq:limsupApkq disjoint} is preserved and that it remains valid for $K$ replaced by $K+1$.
		Let $k\in\{K,K+1\}$.
		Then
		\begin{align*}
			 \bigcup_{m\in I_{x_k}}\pm m p^k +p^{k+1}\bZ_p 
			&= p^k\begin{cases}\displaystyle
				\bZ_p^\times	&\text{if } x_k=p-1,	\\\displaystyle
				\left(\pm 1 + p\bZ_p\right)	&\text{if } 2\le x_k<p-1,	\\\displaystyle
				\emptyset	&\text{otherwise}.
			\end{cases}
		\end{align*}
		From this follows
		\begin{align*}
			\mu_p(\limsup_{q\to\infty} \fA^p_{p^k q}) =& 
			p^{-k-1} 2 \left\lfloor \frac{x_k}{2}\right\rfloor
			\\&
			+ p^{-k} \sum_{i=1}^\infty \mu_p\left( \bigcup_{b'=1}^{b_{k,i}} (\pm (g+b'p^i)^{\pm 1} + p^{i+1}\bZ_p) \right).
		\end{align*}
		We are now done if we, for each of $p=3,5$, can show
		\begin{equation}
			\label{eq:sum=rk, p<=5}
			\sum_{i=1}^\infty \mu_p\left( \bigcup_{b'=1}^{b_{k,i}} (\pm (g+b'p^i)^{\pm 1} + p^{i+1}\bZ_p) \right)
			= r_k,
		\end{equation}
		as this would imply
		\begin{align*}
			\mu_p(\limsup_{q\to\infty} \fA^p_{p^k q}) &= p^{-k-1} 2 \left\lfloor \frac{x_k}{2}\right\rfloor + p^{-k} r_k
			\\&
			=\begin{cases}
				x_{K}p^{-K} + \sum_{l=K+2}^{\infty} x_l p^{-l}	&\text{if } k=K,	\\
				x_{K+1}p^{-(K+1)} 	&\text{if } k=K+1.
			\end{cases}
		\end{align*}
	
		For $p=3$, fix $g=2$.
		Then $\liminf_{l\to\infty} x_l < 2$, and we have
		\begin{equation*}
			\begin{aligned}
				\frac{r_k}{4} \le \frac{1}{4}\left(1 + \sum_{l=k+2}^{\infty} x_l 3^{k - l}\right)
			< \frac{1}{4} \left(1 + \frac{1}{3} \right) = \frac{1}{3},
			\end{aligned}
		\end{equation*}
		i.e., $b_{1,k}=0$.
		For $i\ge 2$ and $1\le b'\le b_{k,i}$, notice that
		\begin{align*}
			(2+b'3^i) + 3^{i+1}\bZ_3 &\subseteq 2 + 3^2 \bZ_3,
			\linebreakLong
			-(2+b'3^i) + 3^{i+1}\bZ_3 &\subseteq 7 + 3^2 \bZ_3,
			\linebreakLong
			(2+b'3^i)^{-1} + 3^{i+1}\bZ_3 &\subseteq 5 + 3^2 \bZ_3,
			\linebreakLong
			-(2+b'3^i)^{-1} + 3^{i+1}\bZ_3 &\subseteq 4 + 3^2 \bZ_3.
		\end{align*}
		Since all balls on the right-hand-side of the above four inclusions are disjoint, the same holds for the four balls defining $(\pm 2^{\pm 1}+b'3^i + 3^{i+1}\bZ_3)$.
		When we then vary $i$, we note that the sets
		$$(\pm 2+b'3^i)^{\pm 1} + 3^{i+1}\bZ_3\subseteq \pm 2^{\pm 1} + 3^i\bZ_3^\times$$
		are also disjoint.
		We conclude equation \eqref{eq:sum=rk, p<=5} by calculating
		\begin{align*}
			\sum_{i=1}^\infty \mu_3\bigg( \bigcup_{b'=1}^{b_{k,i}} (\pm (2+b'3^i)^{\pm 1} + 3^{i+1}\bZ_3) \bigg)
			&
			= \sum_{i=1}^\infty \sum_{b'=1}^{b_{k,i}} 4\mu_3(3^{i+1}\bZ_3)
			\\&
			= 4\sum_{i=1}^\infty b_{k,i} 3^{-i-1}
			= r_k.
		\end{align*}
		
		For $p=5$, fix $g=3$ and estimate
		\begin{equation*}
			\frac{r_k}{4} \le \frac{1}{4}\left(1 + \sum_{l=K+2}^{\infty} x_l 5^{x_{K} - l}\right)
			< \frac{1}{4}\left(1 + \frac{1}{5}\right) < \frac{2}{5},
		\end{equation*}
		so that $b_{k,1}\in\{0,1\}$.
		For $i\ge 1$ and $1\le b'\le b_{k,i}$, let $a_{i}\in\{0,1\}$ such that $a_{1} = b'$ and $a_{i}=0$ for $i>1$.
		We then find
		\begin{align*}
			(3+b'5^i) + 5^{i+1}\bZ_5 &\subseteq 3 +a_i 5 + 5^2 \bZ_5,
			\linebreakLong
			-(3+b'5^i) + 5^{i+1}\bZ_5 &\subseteq 2 + (4-a_i) 5 + 5^2 \bZ_5,
			\linebreakLong
			(3+b'5^i)^{-1} + 5^{i+1}\bZ_5 &\subseteq 2+ (3+a_i)5 + 5^2 \bZ_5,
			\linebreakLong
			-(3+b'5^i)^{-1} + 5^{i+1}\bZ_5 &\subseteq 3 + (1-a_i)5 + 5^2 \bZ_5.
		\end{align*}
		We are again left with four disjoint balls on the right-hand-side, for $i$ fixed.
		We then apply arguments in parallel to those for $p=3$ and conclude equation \eqref{eq:sum=rk, p<=5}.
		This completes the proof.
\end{proof}
In cases \ref{case:p=2} and \ref{case:p=3,5} of the above proof, note that the choice of generator $g$ matters;
for $p=2$, $g>1$ would lead to $\pm g^{\pm 1}$ representing four unique values modulo $2^i$ for sufficiently large $i$, where we want exactly 2 unique values.
For $p=3$, $g\equiv-1\mod 9$ would on the other hand lead to $\pm g^{\pm 1}$ representing only two unique values modulo $9$, where we want exactly 4.
The same issue arises modulo $25$ for $p=5$ when $g+a_{1}5=g+5\equiv \pm 7\mod 25$.

Note also that the alterations introduced in case \ref{case:p=3,5} would not be sufficient for $p=5$ if they were to be applied to case \ref{case:5<p equiv 1 mod p}, even though $5\equiv 1\mod 4$, since it would allow any $b_{k,1}$ between 0 and 4, which would lead to $g+a_{1}5\equiv \pm 7\mod 25$ for some $a_1$, regardless of $g$.

Finally, note that the construction in case \ref{case:p=3,5} would also work for $p=2$ by putting $g=3$ and $r_k=0$ for $k\ne K$, though that would not actually shorten the proof as we would then have to give $p=2$ the same amount of special attention as we gave each of $p=3$ and $p=5$.
This suggests that we are in a peculiar case of $p=2$ \textit{not} being the most troublesome prime, as that title appears to go to $p=5$, with $p=3$ as a close second.

	\section{Concluding Remarks}
	Considering how Proposition \ref{Prop:0-1 law} acts as a shell-wise $\cC^p$-variant of the 0-1 law on $\cA^p$ from \cite{Haynes}, it appears rather plausible that the $p$-adic Duffin--Schaeffer theorem 
	should also have a shell-wise $\cC^p$-variant, as formally stated below. 
	\begin{conj}
		Let $\psi:\bN\to\bR_{\geq 0}$, and let $p$ be a prime. 
		Suppose $\supp\psi \subseteq p^k\bN\setminus p^{k+1}\bN$ for some $k\in\bN_0$.
		Then
		\begin{equation*}
		\mu_p\big(\cC^p\cap p^k \bZ_p^\times \big) = \begin{cases}
		(p-1)/p^{k+1}	&\text{if } \sum_{n=1}^\infty \mu_p(\cC^p_n) = \infty,	\\
		0	&\text{if } \sum_{n=1}^\infty \mu_p(\cC^p_n) < \infty.
		\end{cases}
		\end{equation*}
	\end{conj}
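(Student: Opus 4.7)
The strategy is to combine the zero-full dichotomy from Proposition \ref{Prop:0-1 law} with the $p$-adic Duffin--Schaeffer Theorem via a measure-scaling bijection between $p^k\bZ_p^\times$ and $\bZ_p^\times$. The convergence half follows immediately from the Borel--Cantelli Lemma: if $\sum_n \mu_p(\cC^p_n)<\infty$, then $\mu_p(\cC^p)=\mu_p(\limsup \cC^p_n)=0$. For the divergence half, Proposition \ref{Prop:0-1 law} reduces the claim to proving $\mu_p(\cC^p\cap p^k\bZ_p^\times)>0$.

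First dispose of the degenerate case where $\psi(n)\ge 4^{\omega(n)}$ for infinitely many $n\in\supp\psi$; Lemma \ref{Lemma 2 Haynes} then gives $\cC^p_n\supseteq p^k\bZ_p^\times$ for these $n$, and we are done. Otherwise assume $\psi(n)<4^{\omega(n)}$ eventually, so that $\psi(n)/n\to 0$. Define a companion function $\tilde\psi:\bN\to\bR_{\ge 0}$ by $\tilde\psi(m)=\psi(p^km)$ when $\gcd(m,p)=1$ and $\tilde\psi(m)=0$ otherwise, and let $\Phi:p^k\bZ_p^\times\to\bZ_p^\times$ be given by $\Phi(x)=p^k/x$. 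Combining Lemma \ref{lem:Set inversion} (inversion preserves Haar measure on $\bZ_p^\times$) with the standard identity $\mu_p(p^{-k}A)=p^k\mu_p(A)$, one verifies that $\Phi$ is a bijection satisfying $\mu_p(\Phi(A))=p^k\mu_p(A)$ for every measurable $A\subseteq p^k\bZ_p^\times$. A direct calculation using $|1/y-1/y_0|_p=|y-y_0|_p$ for $y,y_0\in\bZ_p^\times$ yields
\begin{equation*}
\Phi(\cC^p_n(\psi)\cap p^k\bZ_p^\times)=\bigcup_{\substack{|a|<n\\\gcd(a,pn)=1}} B_{\bQ_p}\left(\frac{a}{m},\frac{\psi(n)}{m}\right)\cap\bZ_p^\times
\end{equation*}
whenever $\psi(n)<m$, so that the balls of $\cC^p_n$ lie inside $p^k\bZ_p^\times$.

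The index set for $\cA^p_m(\tilde\psi)\cap\bZ_p^\times$ -- those $a$ with $|a|\le m$, $\gcd(a,m)=1$, and $\gcd(a,p)=1$ (the last condition forced by intersecting with $\bZ_p^\times$) -- is contained in the index set above, so after taking the limit superior one has $\cA^p(\tilde\psi)\cap\bZ_p^\times\subseteq\Phi(\cC^p\cap p^k\bZ_p^\times)$. A Bonferroni estimate further gives the crude bound $\mu_p(\cC^p_n(\psi))\le 2(p-1)\phi(m)\psi(n)/(pm)$, so divergence of $\sum_n\mu_p(\cC^p_n)$ forces $\sum_m\phi(m)\tilde\psi(m)/m=\infty$. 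Since a standard ball-count argument shows $\mu_p(\cA^p_m(\tilde\psi))$ is comparable to $\min\{\phi(m)\tilde\psi(m)/m,1\}$, this forces $\sum_m\mu_p(\cA^p_m(\tilde\psi))=\infty$, so by the $p$-adic Duffin--Schaeffer Theorem, $\mu_p(\cA^p(\tilde\psi))=1$ and hence $\mu_p(\cA^p(\tilde\psi)\cap\bZ_p^\times)=(p-1)/p$. Combining with the scaling property of $\Phi$,
\begin{equation*}
p^k\mu_p(\cC^p\cap p^k\bZ_p^\times)=\mu_p(\Phi(\cC^p\cap p^k\bZ_p^\times))\ge\mu_p(\cA^p(\tilde\psi)\cap\bZ_p^\times)=\frac{p-1}{p},
\end{equation*}
so $\mu_p(\cC^p\cap p^k\bZ_p^\times)\ge(p-1)/p^{k+1}$, and Proposition \ref{Prop:0-1 law} forces equality.

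The main obstacle is the image computation for $\Phi$ together with the subset relation $\cA^p_m(\tilde\psi)\cap\bZ_p^\times\subseteq\Phi(\cC^p_n\cap p^k\bZ_p^\times)$: one must carefully track how the radius $\psi(n)/n$ transforms under the combined scaling--inversion to $\psi(n)/m=p^k\psi(n)/n$ and reconcile the two different coprimality and range constraints on the indices $a$. A secondary concern is that for very small $m$ or for $\psi(n)/m$ below certain thresholds, the ball-counting asymptotic needs a separate verification, but such $n$ form a sparse set whose contribution is absorbed in the limit superior.
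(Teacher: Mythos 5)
This statement is a \emph{conjecture} in the paper, left unproved; the authors only sketch that they expect it to follow from a Koukoulopoulos--Maynard-style quasi-independence argument following Kristensen--Laursen. Your approach — pushing $\cC^p\cap p^k\bZ_p^\times$ forward through the inversion-and-scaling map $\Phi$ onto a genuine $\cA^p$-set and then invoking the known $p$-adic Duffin--Schaeffer Theorem together with Proposition~\ref{Prop:0-1 law} — is genuinely different and, if it could be completed, would be much shorter than what the authors had in mind. The set-theoretic backbone is correct: the convergence half via Borel--Cantelli, the degenerate case via Lemma~\ref{Lemma 2 Haynes}, the bijection $\Phi(x)=p^k/x$ with $\mu_p(\Phi(A))=p^k\mu_p(A)$ (Lemma~\ref{lem:Set inversion} plus scaling), and the inclusion $\cA^p_m(\tilde\psi)\cap\bZ_p^\times\subseteq\Phi(\cC^p_n\cap p^k\bZ_p^\times)$ for $n=p^km$ large all check out, as does the reduction via the zero-full law to showing positivity.

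The gap sits in the step ``$\sum_n\mu_p(\cC^p_n)=\infty\Rightarrow\sum_m\mu_p(\cA^p_m(\tilde\psi))=\infty$''. The inclusion you establish gives the inequality $\mu_p(\cA^p_m(\tilde\psi)\cap\bZ_p^\times)\le p^k\mu_p(\cC^p_n)$, which points the \emph{wrong} way, so you route through $\sum_m\phi(m)\tilde\psi(m)/m=\infty$ and the claim that $\mu_p(\cA^p_m(\tilde\psi))$ is comparable, with an absolute constant, to $\min\{\phi(m)\tilde\psi(m)/m,1\}$. The lower bound in that comparison is not standard. Writing $\tilde\psi(m)/m=p^{-j}$, it amounts to showing that the number of residues in $\{a\bmod p^j:|a|\le m,\ \gcd(a,m)=1\}$ is at least a constant times $\min\{\phi(m),p^j\}$. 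In the regime where $\phi(m)\le p^j\le m$ and $\phi(m)/m$ is small (say $m$ a primorial), a naive pigeonhole only yields roughly $\phi(m)p^j/m$ residues, short by a factor $m/p^j$; one needs a second-moment or Cauchy--Schwarz argument with $2^{\omega(m)}$-type sieve errors to control, precisely the technical heart of Duffin--Schaeffer-type lemmas rather than something that can be waved through as a ``standard ball-count''. And the closing remark that problematic $n$ ``form a sparse set absorbed in the limit superior'' does not apply: the issue is about the radius-to-denominator ratio, not sparsity in $n$.

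The good news is that the gap is avoidable. You have correctly derived $\sum_m\phi(m)\tilde\psi(m)/m=\infty$ from $\sum_n\mu_p(\cC^p_n)=\infty$ via the exact count $\#\{a:|a|<p^km,\ \gcd(a,pm)=1\}=2(p-1)p^{k-1}\phi(m)$. At that point, instead of the paper's boxed statement of the $p$-adic Duffin--Schaeffer Theorem (phrased with $\sum\mu_p(\cA^p_n)$), invoke \cite[Theorem 2]{Kristensen+Laursen}, which the present paper itself uses in the proof of Proposition~\ref{Prop:mup Cp as sum} in exactly the form ``$\sum_{p\nmid n}\phi(n)\psi(n)/n=\infty\Rightarrow\mu_p(\cA^p(\psi))=1$''. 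Applied to $\tilde\psi$, this gives $\mu_p(\cA^p(\tilde\psi))=1$ and hence $\mu_p(\cA^p(\tilde\psi)\cap\bZ_p^\times)=(p-1)/p$ directly, bypassing the ball-count claim entirely. With that substitution your argument appears to close, and it would in fact settle the conjecture.
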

	As with the original and $p$-adic Duffin--Schaeffer Theorems, the Borel-Cantelli Lemma directly implies $\mu_p(\cC^p)=0$ when the series converges.
	If the conjecture holds true, it combines with Proposition \ref{Prop:mup Cp as sum} to provide an explicit formula for determining the $p$-adic measures of $\cB^p$ and $\cC^p$.
	It is expected that the conjecture will follow from a modification of the proof of \cite[Theorem 2]{Haynes} combined with \cite[Proposition 5.4]{Koukoulopoulos+Maynard}, following the structure presented in \cite{Kristensen+Laursen}.
	In the light of \cite[Theorem 2]{Kristensen+Laursen}, it appears only natural if the measures $\mu_p(\cC^p_n)$ in the divergence criterion may be replaced by the fractions $\phi(n)\psi(n)/n$ from the real Duffin--Schaeffer Theorem, by following a similar argument.

	As for the set $\fA^p$, one might try to modify the construction with the aim of decreasing the spectrum of possible measure values, similarly to what Haynes \cite{Haynes} achieved in constructing $\cA^p$ instead of $\cB^p$.
	In his construction, Haynes effectively removed the sets $\cB_{n}^p$ that were restricted to specific shells $p^k\bZ_p^\times$ from consideration as he indirectly forced the sets $\cA_{n}^p$ with $p\mid n$ to be either empty or full \cite[Lemma 2]{Haynes}.
	Trying to get a similar modification of $\fA^p$, we might consider the set
	\begin{equation*}
		\limsup_{n\to\infty} \bigcup_{\substack{d\mid n \\ \gcd(d,pn/d)=1}} B_{\bQ_p}\left(\frac{d}{n/d}, \frac{\psi(n)}{n}\right).
	\end{equation*}
	However, for $p\ne 3,5$ and $x\in[0,(p-1)/p]\cup\{1\}$, the $\psi$ constructed in the proof of Theorem \ref{thm:spectrum var} will still produce measure $x$.
	For $p=3,5$, $\psi$ only works for $x\in \{1,(p-1)/p\}\cup\bigcup_{x_0=0}^{p-2}\frac{x_0}{p} + [0,p^{-2})$, but it seems reasonable that there should also exist a $\psi$ for the remaining $x\in[0,(p-1)/p]\cup\{1\}$; perhaps a hybrid between the constructions from cases \ref{case:p=2} and \ref{case:p=3,5} would do the trick.
	Note that this attack would work identically if the above modification were carried out on $\fK^p$ instead.
	As such, there does not seem to be any immediate `correction' to the set $\fA^p$ that would make it satisfy a 0-1 law in general.

	\vspace{6pt}
		\paragraph{\textbf{Acknowledgements}} I thank my supervisor Simon Kristensen for aiding me in writing this paper,
		and the Independent Research Fund Denmark for funding my research (Grant ref. 1026-00081B).
		I also thank the referee for providing helpful comments.

\bibliographystyle{mscplain}
\bibliography{SpectrumBib.bib}{}

\end{document}